\newtheorem{remark}{Remark}[section]
\def\S{{\mathfrak s}}
\def\T{{\mathcal T}}
\def\E{{\mathcal E}}
\def\O{{\mathcal O}}
\def\B{{\mathcal B}}
\def\A{{\mathcal A}}
\def\bn{{\bf n}}
\def\pT{{\partial T}}
\def\3bar{{|\!|\!|}}
\def\bbeta{{\boldsymbol\beta}}
\newtheorem{algorithm}{SWG Algorithm}[section]
\newtheorem{FD-algorithm}{5-Point Finite Difference Algorithm}[section]
\newcommand{\vertiii}[1]{{\left\vert\kern-0.25ex\left\vert\kern-0.25ex\left\vert #1
    \right\vert\kern-0.25ex\right\vert\kern-0.25ex\right\vert}}
\title{A discrete maximum principle for the weak Galerkin finite element method on nonuniform rectangular partitions}
\author{Yujie Liu\thanks{School of Data and Computer Science, Sun Yat-sen University, Guangzhou, 510275, China (liuyujie5@mail.sysu.edu.cn). The research of Liu was partially supported by Guangdong Provincial Natural Science Foundation (No. 2017A030310285), Shandong Provincial natural Science Foundation (No. ZR2016AB15) and Youthful Teacher Foster Plan Of Sun Yat-Sen University (No. 171gpy118),} \and Junping Wang
\thanks{Division of Mathematical Sciences, National Science Foundation, Alexandria, VA 22314 (jwang@nsf.gov). The research of Wang was supported by the NSF IR/D program, while working at National Science Foundation. However, any opinion, finding, and conclusions or recommendations
expressed in this material are those of the author and do not
necessarily reflect the views of the National Science Foundation.}}
\begin{document}

\maketitle
\begin{abstract}
This article establishes a discrete maximum principle (DMP) for the approximate solution of convection-diffusion-reaction problems obtained from the weak Galerkin finite element method on nonuniform rectangular partitions. The DMP analysis is based on a simplified formulation of the weak Galerkin involving only the approximating functions defined on the boundary of each element. The simplified weak Galerkin method has a reduced computational complexity over the usual weak Galerkin, and indeed provides a discretization scheme different from the weak Galerkin when the reaction term presents. An application of the simplified weak Galerkin on uniform rectangular partitions yields some $5$- and $7$-point finite difference schemes for the second order elliptic equation. Numerical experiments are presented to verify the discrete maximum principle and the accuracy of the scheme, particularly the finite difference scheme.
\end{abstract}

\begin{keywords} discrete maximum principle, simplified weak Galerkin, finite element method, finite difference method, second order elliptic equations.
\end{keywords}

\begin{AMS}
Primary 65N30; Secondary 65N50
\end{AMS}

\pagestyle{myheadings}

%

\section{Introduction}
In this paper, we are concerned with the development of a discrete maximum principle for the weak Galerkin finite element approximations of convection-diffusion-reaction problems. For simplicity, we consider the problem of seeking an unknown function $u=u(x)$ satisfying
\begin{eqnarray}
-\nabla\cdot(\alpha\nabla u) + \bbeta\cdot\nabla u + cu&=&f\quad {\rm in}\  \Omega  \label{ellipticbdy}\\
u&=&g\quad {\rm on}\ \partial\Omega \label{ellipticbc}
\end{eqnarray}
where $\Omega$ is a bounded polytopal domain in $\mathbb{R}^d \;(d\ge 2)$ with boundary $\partial\Omega$, $\alpha=\alpha(x)$ is the diffusion coefficient, $\bbeta=\bbeta(x)$ is the convection, and $c=c(x)$ is the reaction coefficient in relevant applications. We assume that $\alpha$ is sufficiently smooth, $\bbeta\in [W^{1,\infty}(\Omega)]^d$, and $c$ is piecewise constant with respect to a partition of the domain. For well-posedness of the problem \eqref{ellipticbdy}-\eqref{ellipticbc}, we assume $f=f(x)\in L^2(\Omega)$, $g=g(x)\in H^{\frac12}(\partial\Omega)$, and
\begin{equation}\label{EQ:positive}
c-\frac12\nabla\cdot\bbeta \ge 0,\qquad \alpha(x) \ge \alpha_0 \qquad \forall x \in \Omega
\end{equation}
for a constant $\alpha_0>0$.

The weak Galerkin (WG) finite element method, recently introduced in \cite{WangYe_2013,wy3655, mwy}, refers to a natural extension of the standard finite element methods \cite{ciarlet-fem, gr} where the differential operators are approximated as discrete distributions or discrete weak derivatives and the element continuity circumvented by properly selected stabilizers.
The method has good flexibility in making use of discontinuous elements while sharing the simple formulation of the classical continuous or conforming finite element methods. Weak Galerkin can be easily implemented on finite element partitions consisting of general polygonal or polyhedral elements through the usual assembling strategy of element stiffness matrices.
The method has gained a lot attention and popularity recently and has been successfully applied to a variety of partial differential equations, see \cite{WangYe_2013,LiWang_2013,MWWeiYZhao_2013,WangWang_2016} and the references therein for more details.

A simplified formulation of the weak Galerkin finite element method has been developed in \cite{LiDanWW, LiuWang_SWG} for second order elliptic equations in conjunction with the study of superconvergence and error estimates. This simplification idea was further applied to the Stokes equation in  \cite{LiuWang_SWG_Stokes_2018} for the development of a superconvergence theory on nonuniform rectangular partitions for both the velocity and the pressure approximations. In the simplified weak Galerkin (SWG), the degrees of freedom associated with the unknowns in the interior of each element are eliminated from the usual weak Galerkin method, yielding a numerical scheme with significantly reduced computational complexity. For pure diffusion equations (i.e., $\bbeta=0$ and $c=0$ in \eqref{ellipticbdy}), the simplified weak Galerkin is equivalent to the usual weak Galerkin in the sense that the numerical approximations on the element boundary are the same. But for the full convection-diffusion-reaction equation, these two methods give different numerical solutions on the element boundary. Like WG, the simplified weak Galerkin preserves the important mass conservation property locally on each element and allows the use of general polygonal partitions.

The convection-diffusion-reaction equation \eqref{ellipticbdy} is known to satisfy the following maximum principle: If $u\in C^2(\Omega)\cap C^1(\bar\Omega)$ is the solution of \eqref{ellipticbdy}-\eqref{ellipticbc} with $\alpha, \bbeta, c \in C^1(\Omega)$ and $f\le 0$ in $\Omega$, then $u$ attains its maximum value on $\partial\Omega$ if $c=0$ and $u$ attains its non-negative maximum value on $\partial\Omega$ if $c\ge 0$ \cite{Evans}. A discrete maximum principle (DMP) refers to a similar statement for numerical solutions of \eqref{ellipticbdy}-\eqref{ellipticbc} on specific grid points.
The discrete maximum principle is of great importance from physical point of views in scientific computing (e.g. helping avoid non-physical numerical solutions). In the last two decades, a great deal of effort has been devoted to the search of DMP-preserving numerical methods, see \cite{BertolazziManzini_2005,ChristliebLiuTangXu_2015,DroniouPotier_2011,DraganescuDupontScott_2004,Mudunuro,Varga_1966}. For isotropic diffusion problems, it was shown by Ciarlet and Raviart \cite{CiarletRaviart_1973} that the $P_1$-conforming finite element solutions satisfy a DMP if all the triangular elements have non-obtuse dihedral angles. This nonobtuse-angle condition was improved in \cite{StrangFix} by a weaker condition (namely, the Delaunay condition) that requires the sum of any pair of angles facing a common interior edge be less than or equal to $\pi$. A more recent work on DMP was developed in \cite{WangZhang_2012}
for $P_1$-conforming finite element approximations of quasi-linear second order elliptic equations by using the de Giorgi approach developed in PDE analysis. For discontinuous Galerkin finite element approximations, some DMP-preserving schemes were developed in \cite{ZhZhShu_2013} for convection-diffusion equations on triangular meshes. In the weak Galerkin context and for anisotropic diffusion problems, a DMP result was established for the lowest order WG solutions without stabilization in \cite{HuangWang_2015}. In \cite{WYZhaiZhang_2018}, the authors developed a DMP theory for the WG numerical solutions on the element boundary under a weak acute angle condition for triangular partitions.
For weak Galerkin on rectangular partitions, a DMP result was reported numerically  in \cite{WYZhaiZhang_2018}, but no theory was developed over there.

The goal of this paper is to establish a discrete maximum principle for the simplified weak Galerkin finite element approximations of \eqref{ellipticbdy}-\eqref{ellipticbc} on nonuniform rectangular partitions.
We show that a DMP is satisfied by the numerical solutions arising from SWG with certain values of the stabilization parameter on rectangular partitions for elements with aspect ratio in $[0.5,2]$. For a better understanding of the SWG, we shall compute its global stiffness matrix on uniform rectangular partitions, and develop a 5- and 7-point finite difference scheme for the model diffusion equation. As a variant of the weak Galerkin scheme, these finite difference methods can be shown to preserve the important mass conservation property locally on each cell. We note that a similar finite difference method has been developed in \cite{LiuWang_SWG_Stokes_2018}) for the Stokes equation.

The paper is organized as follows: In Section \ref{section-SWG-polymesh}, we state the simplified weak Galerkin finite element method for the model problem \eqref{ellipticbdy}-\eqref{ellipticbc}. In Section \ref{sectionDMP}, we establish a comprehensive DMP theory for the SWG approximations.
Section \ref{Section:TI} is devoted to the derivation of a technical inequality useful to the DMP development. In Section \ref{sectionFDSWG}, we devise a finite difference scheme for the diffusion equation on uniform Cartesian grids based on the SWG formulation. Finally, in Section \ref{numerical-experiments}, we report some numerical results for a verification of the discrete maximum principle.

In the rest of the paper, we assume $d=2$ and shall use the standard notations for Sobolev spaces and norms \cite{ciarlet-fem,gr}. For any open set $D\subset\mathbb{R}^{2}$, $\|\cdot\|_{s,D}$ and $(\cdot,\cdot)_{s,D}$ denote the norm and inner-product in the Sobolev space $H^s(D)$ consisting of square integrable partial derivatives up to order $s$. When $s=0$ or $D=\Omega$, we shall drop the corresponding subscripts in the norm and inner-product notation.

\section{Simplified Weak Galerkin on Polymesh}\label{section-SWG-polymesh}
Let $\T_h=\{T\}$ be a shape-regular polygonal partition of the domain $\Omega$.  For $T\in \T_h$, denote by $h_T$ its diameter and by $N$ the number of edges.
The meshsize of $\T_h$ is defined as $h=\max_{T\in\T_h} h_T$.
For each edge $e_i, \ i=1,\ldots, N$, let $M_i$ be the midpoints and $\bn_i$ be the outward normal direction of $e_i$; see Fig. \ref{fig.hexahedron}.

Let $v_b$ be a piecewise constant function defined on the boundary of $T$.
The weak gradient of $v_b$ \cite{wy3655,mwy} is given by
\begin{equation}\label{DefWGpoly}
\nabla_w v_b:=\displaystyle\frac{1}{|T|}\sum_{i=1}^N v_{b,i}|e_i|\bf{n_i},
\end{equation}
where $v_{b,i}=v_b|_{e_i}$, $|e_i|$ is the length of the edge $e_i$, and $|T|$ is the area of $T$. The weak gradient $\nabla_w v_b$ can be seen to satisfy the following equation:
\begin{equation}\label{DefWGpoly-new}
(\nabla_w v_b, \bm{\phi})_T=\langle v_b, \bm{\phi}\cdot\bn\rangle_\pT
\end{equation}
for all constant vector $\bm{\phi}$, where and in what follows of the paper, $\langle\cdot,\cdot\rangle_\pT$ is the notation for the usual inner product in $L^2(\pT)$.

Denote by $W(T)$ the local finite element space consisting of piecewise constant functions on $\pT$. The global finite element space, denoted by $W(\T_h)$, is given by patching all the local elements $W(T)$ through common values on interior edges. Denote by $W_h^0(\T_h)$ the closed subspace of $W_h(\T_h)$ consisting of functions with vanishing boundary values.

Let ${P}_k(T)$ be the space of polynomials of degree $k \ge 0$ on $T$. Each $v_b\in W(T)$ can be extended to $T$ as a linear function $\S(v_b)\in {P}_1 (T)$ by the following equations:
\begin{equation}\label{Def.extension}
\sum_{i=1}^{N}(\S(v_b)(M_i) -v_{b,i})\phi(M_i)|e_i|=0,\quad \forall\; \phi\in {P}_1(T).
\end{equation}
It is not hard to show that $\S(u_b)$ is well defined by \eqref{Def.extension}.

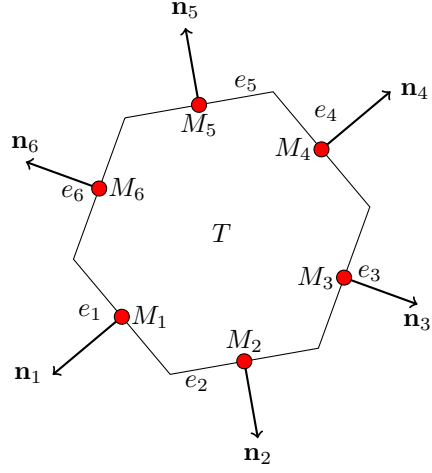
\begin{figure}[!h]
\begin{center}
\begin{tikzpicture}[rotate=40, scale =2.0]

    \path (-0.8660, 0.5) coordinate (A1);
    \path (-0.8660,-0.5) coordinate (A2);
    \path (0.     ,-1  ) coordinate (A3);
    \path (0.8660 ,-0.5) coordinate (A4);
    \path (0.8660 ,0.5 ) coordinate (A5);
    \path (0.     ,1.0 ) coordinate (A6);

    \path (0.0,0.0) coordinate (center);

    \path (-0.8660,0.   )    coordinate (A1half);
    \path (-0.433 ,-0.75)    coordinate (A2half);
    \path ( 0.433 ,-0.75)    coordinate (A3half);
    \path ( 0.8660,0.   )    coordinate (A4half);
    \path ( 0.433 , 0.75)    coordinate (A5half);
    \path (-0.433 , 0.75)    coordinate (A6half);

    \path (A1half) ++(-0.25 ,  0.25)  coordinate (A1halfe);
    \path (A2half) ++(-0.225, -0.0 )  coordinate (A2halfe);
    \path (A3half) ++( 0.225, -0.0 )  coordinate (A3halfe);
    \path (A4half) ++( 0.25 ,  0.25)  coordinate (A4halfe);
    \path (A5half) ++( 0.225,  0.0 )  coordinate (A5halfe);
    \path (A6half) ++(-0.225,  0.0 )  coordinate (A6halfe);

    \path (A1half) ++(-0.6   ,   0   )  coordinate (A1To);
    \path (A2half) ++(-0.2598,  -0.45)  coordinate (A2To);
    \path (A3half) ++( 0.2598,  -0.45)  coordinate (A3To);
    \path (A4half) ++( 0.6   ,   0   )  coordinate (A4To);
    \path (A5half) ++( 0.2598,   0.45)  coordinate (A5To);
    \path (A6half) ++(-0.2598,   0.45)  coordinate (A6To);
    \draw (A6) -- (A1) -- (A2) -- (A3)--(A4)-- (A5)--(A6);

    \draw node at (center) {$T$};
    \draw node[right] at (A1half) {$M_{1}$};
    \draw node[above] at (A2half) {$M_{2}$};
    \draw node[left] at (A3half) {$M_{3}$};
    \draw node[left]  at (A4half) {$M_{4}$};
    \draw node[below] at (A5half) {$M_{5}$};
    \draw node[right] at (A6half) {$M_{6}$};

    \draw node[right] at (A1halfe) {$e_{1}$};
    \draw node[left] at (A2halfe) {$e_{2}$};
    \draw node[below] at (A3halfe) {$e_{3}$};
    \draw node[below]  at (A4halfe) {$e_{4}$};
    \draw node[right] at (A5halfe) {$e_{5}$};
    \draw node[above] at (A6halfe) {$e_{6}$};

    \draw[->,thick] (A1half) -- (A1To) node[left] {$\mathbf{n}_1$};
    \draw[->,thick] (A2half) -- (A2To) node[below]{$\mathbf{n}_2$};
    \draw[->,thick] (A3half) -- (A3To) node[below]{$\mathbf{n}_3$};
    \draw[->,thick] (A4half) -- (A4To) node[right]{$\mathbf{n}_4$};
    \draw[->,thick] (A5half) -- (A5To) node[above]{$\mathbf{n}_5$};
    \draw[->,thick] (A6half) -- (A6To) node[above]{$\mathbf{n}_6$};

    \draw [fill=red] (A1half) circle (0.05cm);
    \draw [fill=red] (A2half) circle (0.05cm);
    \draw [fill=red] (A3half) circle (0.05cm);
    \draw [fill=red] (A4half) circle (0.05cm);
    \draw [fill=red] (A5half) circle (0.05cm);
    \draw [fill=red] (A6half) circle (0.05cm);

\end{tikzpicture}
\end{center}
\caption{An illustrative polygonal element.}
\label{fig.hexahedron}
\end{figure}

On each element $T \in \T_h $, we introduce three bilinear forms:
\begin{eqnarray}
a_T(u_b,v_b)&:= & (\alpha\nabla_w u_b, \nabla_w v_b)_T, \label{EQ:aform}\\
b_T(u_b,v_b)&:= & (\bbeta\cdot\nabla_w u_b, \S(v_b))_T,\label{EQ:bform}\\
c_T(u_b,v_b)&:= & (c\S(u_b), \S(v_b))_T.\label{EQ:cform}
\end{eqnarray}
Furthermore, let
\begin{equation}\label{EQ:Fulla}
\mathcal{B}_T(u_b,v_b):=a_T(u_b,v_b) + b_T(u_b,v_b) + c_T(u_b,v_b)
\end{equation}
for $u_b, v_b \in W(T)$. To enforce a weak continuity, we introduce the following stabilizer:
\begin{equation}\label{EQ:stabilizer}
\begin{split}
S_T(u_b,v_b):= & h^{-1}\sum_{i=1}^N (\S(u_b)(M_i)-u_{b,i})(\S(v_b)(M_i)-v_{b,i})|e_i|\\
             = & h^{-1}\langle Q_b\S(u_b)-u_{b},Q_b\S(v_b)-v_{b}\rangle_{\partial T},
             \end{split}
\end{equation}
where $Q_b$ is the $L^2$ projection operator onto $W(T)$. It is clear that $Q_b u$ is the average of $u$ on each edge. With an abuse of notation, but without confusion, we use $Q_b(g)$ to denote the average of the Dirichlet value $g$ on each boundary edge.
\medskip

\begin{algorithm}
The simplified weak Galerkin (SWG) scheme for the convection-diffusion-reaction equation \eqref{ellipticbdy}-\eqref{ellipticbc} seeks
$u_b \in W_h(\T_h)$ satisfying $u_b = Q_b(g)$ on $\partial\Omega$ and
\begin{equation}\label{equation.SWG}
\A(u_b, v_b) =(f, \S(v_b))\qquad \forall v_b \in W_h^0(\T_h),
\end{equation}
where $\A(\cdot,\cdot)= \kappa S(\cdot,\cdot)+ \B(\cdot, \cdot)$ and
\begin{eqnarray*}
  S(u_b,v_b) = \sum_{T\in\T_h} S_T(u_b,v_b),\quad
  \B(u_b,v_b)= \sum_{T\in\T_h} \mathcal{B}_T(u_b, v_b)
\end{eqnarray*}
are bilinear forms in $W_h(\T_h)$, $(f,\S(v_b)):=\sum_{T\in\T_h} (f, \S(v_b))_T$ is a linear form in $W_h(\T_h)$.
\end{algorithm}

\medskip
The following result on the solution existence and uniqueness has been established by the authors in \cite{LiuWang_SWG}.

\begin{theorem}\label{Lemma:lemma4}
For the model problem \eqref{ellipticbdy}-\eqref{ellipticbc}, assume that $\bbeta\in W^{1,\infty}(\Omega)$ and the ellipticity condition \eqref{EQ:positive} is satisfied. Then, the bilinear form $\A(\cdot,\cdot)$
is bounded and coercive in the finite element space $W_h^0(\T_h)$; i.e., there exist constants $M$ and $\Lambda >0$ such that
\begin{eqnarray}\label{EQ:815:200-00}
|\A(v_b, w_b)| & \le & M \3bar v_b\3bar  \3bar w_b\3bar \qquad \forall v_b, w_b \in W_h^0(\T_h),\\
\label{EQ:815:200-0}
\A(v_b, v_b) & \ge & \Lambda \3bar v_b\3bar^2\qquad \forall v_b\in W_h^0(\T_h),
\end{eqnarray}
provided that the meshsize $h$ of $\T_h$ is sufficiently small. Consequently, the SWG finite element scheme \eqref{equation.SWG} has one and only one solution in the finite element space $W_h(\T_h)$ when the meshsize $h$ is sufficiently small.
\end{theorem}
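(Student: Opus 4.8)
The plan is to reduce everything to the Lax--Milgram theorem, so that the whole argument amounts to verifying the two estimates \eqref{EQ:815:200-00} and \eqref{EQ:815:200-0} in the energy norm $\3bar\cdot\3bar$. Before either estimate I would prepare three local tools on a generic element $T$. First, a discrete Poincar\'e inequality valid on $W_h^0(\T_h)$ (using the vanishing boundary values) of the form $\|\S(v_b)\|\lesssim\3bar v_b\3bar$; this is what lets the lower-order terms be measured against the full norm. Second, the crucial comparison of the two constant vectors $\nabla_w v_b$ and $\nabla\S(v_b)$: testing the weak-gradient identity \eqref{DefWGpoly-new} and the divergence theorem applied to the linear function $\S(v_b)$ against an arbitrary constant $\bm\phi$ and subtracting gives $(\nabla_w v_b-\nabla\S(v_b),\bm\phi)_T=\langle v_b-\S(v_b),\bm\phi\cdot\bn\rangle_{\pT}$; since $\bm\phi\cdot\bn$ is constant on each edge, the right-hand side equals $\langle v_b-Q_b\S(v_b),\bm\phi\cdot\bn\rangle_{\pT}$, which after Cauchy--Schwarz and a scaling argument yields $\|\nabla_w v_b-\nabla\S(v_b)\|_T\lesssim S_T(v_b,v_b)^{1/2}$. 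Third, a local trace/inverse estimate bounding $\|\S(v_b)\|_{\pT}$ and the edgewise jumps of $\S(v_b)$ in terms of $\|\S(v_b)\|_T$, $\|\nabla_w v_b\|_T$ and $S_T(v_b,v_b)^{1/2}$.

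With these in hand the boundedness \eqref{EQ:815:200-00} is the routine part: one bounds $a_T$ by Cauchy--Schwarz and $\alpha\in L^\infty$, $c_T$ by $c\in L^\infty$ and the first tool, $b_T$ by $\bbeta\in L^\infty$ and the first tool, while $S_T$ is part of the norm by construction; summing over $T\in\T_h$ and applying a discrete Cauchy--Schwarz produces the constant $M$.

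The heart of the matter is the coercivity \eqref{EQ:815:200-0}. The diffusion and stabilization contributions are favorable: $a_T(v_b,v_b)\ge\alpha_0\|\nabla_w v_b\|_T^2$ by \eqref{EQ:positive}, and $\kappa S_T(v_b,v_b)\ge 0$. For the convection form I would write $b_T(v_b,v_b)=(\bbeta\cdot\nabla\S(v_b),\S(v_b))_T+(\bbeta\cdot(\nabla_w v_b-\nabla\S(v_b)),\S(v_b))_T$. On the leading term the product rule $\bbeta\cdot\nabla(\S(v_b))\,\S(v_b)=\tfrac12\bbeta\cdot\nabla(\S(v_b)^2)$ and the divergence theorem give $-\tfrac12((\nabla\cdot\bbeta)\S(v_b),\S(v_b))_T+\tfrac12\langle(\bbeta\cdot\bn)\S(v_b),\S(v_b)\rangle_{\pT}$; combined with the reaction form this produces $((c-\tfrac12\nabla\cdot\bbeta)\S(v_b),\S(v_b))_T\ge 0$ by the sign condition in \eqref{EQ:positive}, leaving only boundary fluxes and the consistency remainder to dispose of.

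What remains is therefore to absorb the boundary fluxes and the convection remainder, and this is where I expect the real difficulty and the origin of the smallness-of-$h$ hypothesis. Summing the fluxes over all elements, the interior-edge contributions do not cancel because $\S(v_b)$ is discontinuous across edges; instead they assemble into weighted jumps of $\S(v_b)^2$, which by the third tool are controlled by the stabilizer together with the trace of $\S(v_b)$. The remainder is bounded by the second tool as $\lesssim\|\bbeta\|_\infty S_T(v_b,v_b)^{1/2}\|\S(v_b)\|_T$. Applying Young's inequality splits each of these into a multiple of $S(v_b,v_b)$, absorbed by choosing the stabilization parameter $\kappa$ large, plus a multiple of $\|\S(v_b)\|^2$, absorbed into $\alpha_0\sum_T\|\nabla_w v_b\|_T^2$ through the discrete Poincar\'e inequality once the accumulated $\|\bbeta\|_{W^{1,\infty}}$-dependent constants are dominated for $h$ small. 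Tracking the exact powers of $h$ in these consistency and flux estimates, and verifying that they leave a strictly positive $\Lambda$, is the main obstacle. Finally, existence and uniqueness follow from Lax--Milgram applied to the affine problem obtained after lifting the boundary data $Q_b(g)$: the resulting right-hand side is a bounded linear functional on $W_h^0(\T_h)$ and $\A(\cdot,\cdot)$ is coercive there.
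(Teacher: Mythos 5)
First, a point of comparison: this paper contains no proof of Theorem \ref{Lemma:lemma4} at all --- the result is quoted from \cite{LiuWang_SWG} --- so your proposal can only be measured against what a correct argument must deliver. Your Lax--Milgram skeleton (routine boundedness; coercivity from elementwise integration by parts of the convection term plus the sign condition \eqref{EQ:positive}) is the right frame, but it has a genuine gap at exactly the point you defer as ``tracking the exact powers of $h$'': with the tools you propose, the leftover terms carry \emph{no} positive power of $h$ whatsoever. Concretely, using single-valuedness of $v_b$ and $v_b|_{\partial\Omega}=0$, the assembled boundary fluxes equal
\begin{equation*}
\frac12\sum_{T\in\T_h}\big\langle(\bbeta\cdot\bn)\,(\S(v_b)-v_b),\,\S(v_b)+v_b\big\rangle_{\pT},
\end{equation*}
and Cauchy--Schwarz with your trace/stabilizer estimates gives $\|\S(v_b)-v_b\|_{\pT}\lesssim h^{1/2}\big(S_T(v_b,v_b)^{1/2}+\|\nabla_w v_b\|_T\big)$ while $\|\S(v_b)+v_b\|_{\pT}\lesssim h^{-1/2}\|\S(v_b)\|_T+\cdots$; the half-powers of $h$ cancel, leaving a term of size $O(\|\bbeta\|_\infty)\,\3bar v_b\3bar\,\|\S(v_b)\|$. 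The same happens with your remainder, since $\|\nabla_w v_b-\nabla\S(v_b)\|_T\lesssim S_T(v_b,v_b)^{1/2}$ has no $h$ in it. After Young and the discrete Poincar\'e inequality such terms become $O(1)\,\3bar v_b\3bar^2$ with constants independent of $h$, so letting $h\to0$ accomplishes nothing: your route yields only a G\aa rding-type inequality, or coercivity under a smallness condition on the data. The missing idea is the cancellation that actually manufactures the factor $h$: replace $\bbeta$ by its elementwise average $\overline{\bbeta}$ (error $O(h\|\bbeta\|_{W^{1,\infty}})$ --- this is where $\bbeta\in W^{1,\infty}$ rather than merely $L^\infty$ is needed), and observe that for constant $\bbeta$ the leading flux contributions cancel exactly when opposite edges are paired, because $\nabla\S(v_b)$ is a constant vector, $v_b$ is constant on each edge, and $\S(v_b)-v_b$ takes equal values at the midpoints of opposite edges by \eqref{S-Property:002}--\eqref{S-Property:008}; what survives is $O(h)\,\|\bbeta\|_\infty S_T(v_b,v_b)^{1/2}\|\nabla_w v_b\|_T$, which \emph{can} be absorbed for small $h$. (On rectangular elements one even has $\nabla_w v_b\equiv\nabla\S(v_b)$, so your remainder term vanishes identically.)

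A second, independent flaw is your fallback of ``choosing the stabilization parameter $\kappa$ large.'' The parameter $\kappa$ is a fixed datum of the scheme, not at the prover's disposal, and in this paper the coercivity \eqref{EQ:815:200-0} is invoked in the proof of Theorem \ref{Th.DMP.squaremesh} (at step \eqref{EQ:888}) precisely under the hypothesis \eqref{EQ:KappaCond}, which caps $\kappa$ from \emph{above} (for square elements, $\kappa<4\alpha$ up to lower-order terms). A coercivity estimate valid only for large $\kappa$ would therefore render the paper's main DMP theorem vacuous. The statement really does require $\Lambda>0$ for each fixed admissible $\kappa>0$, with all of the smallness carried by $h$ alone --- which is exactly what the cancellation mechanism above provides, and what your proposal, as written, cannot.
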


\section{Discrete Maximum Principle}\label{sectionDMP}
The model problem \eqref{ellipticbdy}-\eqref{ellipticbc} is known to satisfy the following maximum principle: If $u\in C^2(\Omega)\cap C^1(\bar\Omega)$ is the solution of \eqref{ellipticbdy}-\eqref{ellipticbc} with $\alpha, \bbeta, c \in C^1(\Omega)$ and non-positive $f\in C(\Omega)$, then $u$ attains its maximum value on $\partial\Omega$ when $c=0$ and attains its non-negative maximum value on $\partial\Omega$ when $c\ge 0$ \cite{Evans}. In this section, we show that the maximum principle also holds true for the numerical solutions arising from the SWG scheme \eqref{equation.SWG} on nonuniform rectangular partitions. From now on, the finite element partitions $\T_h$ is assumed to contain only rectangular elements.


In practical computation, the load linear form $\sum_T(f, \S(v_b))_T$ in the SWG scheme \eqref{equation.SWG} must be approximated by using numerical integrations on each element $T$. Let us first derive a discrete version for $\sum_T(f, \S(v_b))_T$ on which the maximum principles will be established.

Let $T\in\T_h$ be a rectangular element depicted in Fig. \ref{fig:rectangular} with center $M_T=(x_T, y_T)$. Denote by $h_x:=|e_3|=|e_4|$ and $h_y:=|e_1|=|e_2|$ the local meshsize in $x$ and $y$ directions. From \eqref{Def.extension}, the linear extension of $u_b \in W(T)$ can be represented as (see Lemma 6.1 in \cite{LiDanWW} for details)
\begin{equation}\label{EQ:extension}
\S(u_b)=\gamma_0 + \gamma_1(x-x_T)+ \gamma_2(y-y_T),
\end{equation}
where
\begin{equation*}
\left\{
\begin{array}{lllll}
\gamma_0 =\displaystyle\frac{|e_1|(u_{b,1} + u_{b,2}) + |e_3|(u_{b,3} +u_{b,4}) }{2|e_1|+2|e_3|},\\
\gamma_1 =(u_{b,2} - u_{b,1})/|e_3|,\\
\gamma_2 =(u_{b,4} - u_{b,3})/|e_1|.\\
\end{array}
\right.
\end{equation*}
It follows that
\begin{equation}\label{S-Property:002}
\begin{split}
(u_b-{\S}(u_b))(M_1) & =(u_b-{\S}(u_b))(M_2)\\
& = \frac{|e_3|}{2(|e_1|+|e_3|)}(u_{b,1}+u_{b,2}-u_{b,3}-u_{b,4}),\\
\end{split}
\end{equation}
\begin{equation}\label{S-Property:008}
\begin{split}
(u_b-{\S}(u_b))(M_3) & =(u_b-{\S}(u_b))(M_4)\\
& =- \frac{|e_1|}{2(|e_1|+|e_3|)}(u_{b,1}+u_{b,2}-u_{b,3}-u_{b,4}).
\end{split}
\end{equation}

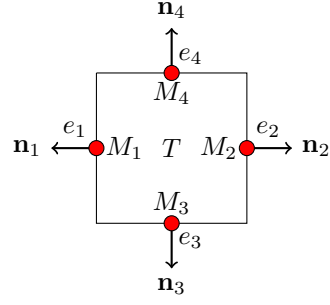
\begin{figure}[!h]
\begin{center}
\begin{tikzpicture}[rotate=0]
    \path (0,0) coordinate (A1);
    \path (2,0) coordinate (A2);
    \path (2,2) coordinate (A3);
    \path (0,2) coordinate (A4);

    \path (1.0,1.0) coordinate (center);

    \path (1.0,0) coordinate (A1half);
    \path (2,1)   coordinate (A2half);
    \path (1.0,2) coordinate (A3half);
    \path (0,1)   coordinate (A4half);

    \path (1.25,0) coordinate (A1halfe);
    \path (2,1.25)  coordinate (A2halfe);
    \path (1.25,2) coordinate (A3halfe);
    \path (0,1.25)  coordinate (A4halfe);

    \path (A1half) ++(0, -0.6)  coordinate (A1To);
    \path (A2half) ++(0.6,  0)  coordinate (A2To);
    \path (A3half) ++(0,  0.6)  coordinate (A3To);
    \path (A4half) ++(-0.6, 0)  coordinate (A4To);
    \draw (A4) -- (A1) -- (A2) -- (A3)--(A4);

    \draw node at (center) {$T$};
    \draw node[above] at (A1half) {$M_{3}$};
    \draw node[left]  at (A2half) {$M_{2}$};
    \draw node[below] at (A3half) {$M_{4}$};
    \draw node[right] at (A4half) {$M_{1}$};

    \draw node[below] at (A1halfe) {$e_{3}$};
    \draw node[right]  at (A2halfe) {$e_{2}$};
    \draw node[above] at (A3halfe) {$e_{4}$};
    \draw node[left] at (A4halfe) {$e_{1}$};

    \draw[->,thick] (A1half) -- (A1To) node[below]{$\mathbf{n}_3$};
    \draw[->,thick] (A2half) -- (A2To) node[right]{$\mathbf{n}_2$};
    \draw[->,thick] (A3half) -- (A3To) node[above]{$\mathbf{n}_4$};
    \draw[->,thick] (A4half) -- (A4To) node[left]{$\mathbf{n}_1$};

    \draw [fill=red] (A1half) circle (0.1cm);
    \draw [fill=red] (A2half) circle (0.1cm);
    \draw [fill=red] (A3half) circle (0.1cm);
    \draw [fill=red] (A4half) circle (0.1cm);
\end{tikzpicture}
\end{center}
\caption{An illustrative rectangular element.}
\label{fig:rectangular}
\end{figure}

Let $\phi_i$ be the local basis function associated with the edge $e_i$ (i.e., $\phi_i=1$ on $e_i$ and $\phi_0=0$ on other edges). For any $v_b=\sum_{i=1}^4 v_{b,i}\phi_i\in W(T)$, we have from \eqref{EQ:extension}
\begin{equation}\label{EQ:MyEQ:200}
\S(v_b) = \sum_{i=1}^4 v_{b,i}\S(\phi_i),
\end{equation}
where
\begin{equation}\label{EQ:MyEQ:201}
\begin{split}
\S(\phi_1) =&\frac{h_y}{2(h_x+h_y)} - \frac{x-x_{T}}{h_x},\quad
\S(\phi_2) =\frac{h_y}{2(h_x+h_y)} + \frac{x-x_{T}}{h_x},\\
\S(\phi_3) =&\frac{h_x}{2(h_x+h_y)} - \frac{y-y_{T}}{h_y},\quad
\S(\phi_4) =\frac{h_x}{2(h_x+h_y)} + \frac{y-y_{T}}{h_y}.\\
\end{split}
\end{equation}
It follows that
\begin{equation}\label{EQ:MyEQ:202}
(f, \S(v_b))_T = \sum_{i=1}^4 v_{b,i} (f, \S(\phi_i))_T.
\end{equation}
Note that $\S(\phi_1)$ is linear in $x$- direction, and constant in $y$-. Thus, the integral $(f, \S(\phi_1))_T$ can be approximated by using the mid-point rule in $y$-direction and the Simpson's rule in $x$-direction:
\begin{equation}\label{EQ:MyEQ:205}
\begin{split}
& (f, \S(\phi_1))_T  = \int_{T} f \S(\phi_1) dxdy\\
= \ &\frac{h_yh_x}{6}\left(f \S(\phi_1)|_{M_1} + 4f\S(\phi_1)|_{(x_T,y_T)} + f\S(\phi_1)|_{M_2}\right) + \O(h^4) \\
= &\frac{h_xh_y}{12(h_x+h_y)}\left((2h_y+h_x)f(M_1) + 4h_y f(x_T, y_T) - h_x f(M_2)\right) + \O(h^4)
\end{split}
\end{equation}
From the Taylor expansion, we have
$$
f(M_1)+f(M_2) = 2 f(x_T,y_T) + \O(h^2).
$$
Substituting the above into \eqref{EQ:MyEQ:205} yields
\begin{equation}\label{EQ:MyEQ:205n}
\begin{split}
& (f, \S(\phi_1))_T\\
= \ & \frac{h_xh_y}{12(h_x+h_y)}\left(2(h_x+h_y)f(M_1) + (4h_y-2h_x)f(x_T, y_T)\right) + \O(h^4)\\
= \ &\frac{|T|}{6} f(M_1) + \frac{|T|(2h_y-h_x)}{6(h_x+h_y)}f(x_T, y_T) +\O(h^4)\\
= \ &\frac{|T|}{6} f(M_1) + \frac{|T|(2-\sigma)}{6(1+\sigma)}f(x_T, y_T) +\O(h^4),
\end{split}
\end{equation}
where $\sigma=h_x/h_y$. Analogously, we have
\begin{eqnarray}
(f, \S(\phi_2))_T  &=&  \frac{|T|}{6} f(M_2) + \frac{|T|(2-\sigma)}{6(1+\sigma)}f(x_T, y_T) + \O(h^4),\label{EQ:MyEQ:206} \\
(f, \S(\phi_3))_T  &=&  \frac{|T|}{6} f(M_3) + \frac{|T|(2-\sigma^{-1})}{6(1+\sigma^{-1})}f(x_T, y_T) + \O(h^4),
\label{EQ:MyEQ:207} \\
(f, \S(\phi_4))_T  &=& \frac{|T|}{6} f(M_4) + \frac{|T|(2-\sigma^{-1})}{6(1+\sigma^{-1})}f(x_T, y_T) + \O(h^4).
\label{EQ:MyEQ:208}
\end{eqnarray}

Using \eqref{EQ:MyEQ:202}-\eqref{EQ:MyEQ:208}, we may approximate $(f,\S(v_b))_T$ by $(f, \S(v_b))_{h,T}$ given as follows:
\begin{equation}\label{EQ:MyEQ:209}
\begin{split}
(f, \S(v_b))_{h,T}:= &\ \frac{|T|}{6}\sum_{i=1}^4 f(M_i)v_{b,i} \\
& \ + \frac{|T|}{6(1+\sigma)}f(x_T,y_T)\left( (2-\sigma)(v_{b,1}+v_{b,2}) + (2\sigma-1)(v_{b,3}+v_{b,4}) \right).
\end{split}
\end{equation}
In the case that $f=f(x,y)$ is non-smooth, the values $f(M_i)$ and $f(x_T,y_T)$ can be substituted by a local average of $f$ around $M_i$ and $(x_T,y_T)$, respectively. Our computational version of the SWG finite element method for the elliptic equation \eqref{ellipticbdy}-\eqref{ellipticbc} can be stated as follows:

\begin{algorithm} Find $u_b \in W_h(\T_h)$ such that $u_b|_{\partial\Omega}=Q_b(g)$ and
\begin{equation}\label{EQ.SWGh}
 \A(u_b,v_b ) =\sum_T(f, \S(v_b))_{h,T}\qquad \forall v_b \in W_h^0(\T_h),
\end{equation}
where $(f, \S(v_b))_{h,T}$ is given by \eqref{EQ:MyEQ:209} on each element $T$.
\end{algorithm}

\medskip
We are now in a position to state the main result of this section on discrete maximum principles.

\begin{theorem}[discrete maximum principle]\label{Th.DMP.squaremesh}
Let $u_b\in W(\T_h)$ be the numerical solution of the model problem \eqref{ellipticbdy}-\eqref{ellipticbc} arising from the scheme (\ref{EQ.SWGh}) on a general rectangular partition $\T_h$ satisfying \eqref{EQ:KappaCond}. Assume the following conditions are satisfied:
\begin{itemize}
\item[(i)] the aspect ratio for each element $T\in \T_h$ satisfies $\sigma\in [0.5, 2]$ with $\sigma=h_y/h_x$,
\item[(ii)] the right-hand side load function is non-positive in $\Omega$ (i.e., $f(x,y)\le 0$ for $(x,y)\in \Omega$).
\end{itemize}
Then the following results hold true:
\begin{itemize}
\item[(a)] If $c=0$ in \eqref{ellipticbdy}, then
\begin{equation}\label{EQ:DMP}
\max_{(x,y)\in\Omega_h}u_b(x,y)\leq \max_{(x,y)\in\partial\Omega_h}Q_b g(x,y).
\end{equation}
\item[(b)] If $c\ge 0$ and has piecewise constant value, then
\begin{equation}\label{EQ:DMP-c}
\max_{(x,y)\in\Omega_h}u_b(x,y)\leq \max_{(x,y)\in\partial\Omega_h}\max(Q_b g(x), 0).
\end{equation}
\end{itemize}
Here in \eqref{EQ:DMP}-\eqref{EQ:DMP-c}, $\Omega_h$ stands for the set of edge midpoints and $\partial\Omega_h$ refers those on the boundary $\partial\Omega$.

\end{theorem}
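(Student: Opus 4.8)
The plan is to recast the scheme \eqref{EQ.SWGh} as a linear system for the edge unknowns $u_{b,i}$ and to verify that the assembled matrix is a monotone (M-)matrix, after which the maximum principle follows from a standard discrete comparison argument. Testing \eqref{EQ.SWGh} with the edge basis function $\phi_*$ associated to an interior midpoint $M_*$ produces the $*$-th equation, whose coefficients are obtained by assembling, over the one or two rectangles containing $e_*$, the element contributions of $a_T$, $\kappa S_T$, $b_T$ and $c_T$. First I would tabulate these contributions explicitly using \eqref{EQ:extension}, \eqref{EQ:MyEQ:201} and \eqref{S-Property:002}--\eqref{S-Property:008}. A direct computation gives, for the diffusion form $a_T(u_b,\phi_i)$, a coupling only between $e_i$ and the parallel edge opposite to it, with off-diagonal entry $-\alpha h_y/h_x$ or $-\alpha h_x/h_y$, while the stabilizer $\kappa S_T(u_b,\phi_i)$ contributes the common factor $\rho:=\kappa\,h_xh_y/\big(2h(h_x+h_y)\big)$ coupling all four edges through the combination $u_{b,1}+u_{b,2}-u_{b,3}-u_{b,4}$. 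Two structural facts emerge: the diffusion-plus-stabilizer part has vanishing row sums (constants lie in its kernel since $\nabla_w$ annihilates constants and $\S$ reproduces them), and the stabilizer is precisely what couples the ``horizontal'' and ``vertical'' edge families, which is indispensable for the irreducibility needed below.

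Next I would establish the three sign conditions that make the interior block an irreducibly diagonally dominant M-matrix. The diagonal entries are positive, and the off-diagonal toward a parallel edge, $-\alpha h_y/h_x+\rho+(\text{convection})$, is non-positive because the $O(1)$ diffusion term dominates for small $h$; the off-diagonal toward a perpendicular edge equals $-\rho+(\text{convection})$, and here the convection contribution is of the same order $O(h)$ as $\rho$, so non-positivity holds exactly when the stabilization parameter is large enough to dominate the convection, which is the content of the hypothesis \eqref{EQ:KappaCond}. I would then show that the reaction form $c_T$ contributes entries of order $O(h^2)$, hence negligible against the stabilizer for small $h$, and that its row sum equals $\A(\mathbf 1,\phi_*)=\sum_{T\ni e_*} c\int_T \S(\phi_*)>0$ wherever $c>0$; thus the assembled row sums are $0$ when $c=0$ and non-negative, strictly positive where $c>0$, when $c\ge 0$.

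The remaining ingredient is the sign of the discretized load. Using \eqref{EQ:MyEQ:209}, each coefficient of $f$ is a product of $|T|/6>0$ and, for the cell-center value, the factors $2-h_x/h_y$ and $2h_x/h_y-1$ together with the analogous $y$-direction factors. The aspect-ratio hypothesis $\sigma\in[0.5,2]$ is exactly what forces all these factors to be non-negative, so that $f\le 0$ in $\Omega$ yields $\sum_T (f,\S(\phi_*))_{h,T}\le 0$ at every interior edge. With the monotone interior matrix $A$, non-positive boundary couplings, and non-positive load in hand, I would conclude by the usual argument: writing the interior equations as $A u_I=F-B\,(Q_b g)$ with $A^{-1}\ge 0$ and $-B\ge 0$, and using $A\mathbf 1\ge -B\mathbf 1$ from the row-sum identity, gives $u_I\le A^{-1}(-B)(Q_b g)$, whose rows are a convex combination when $c=0$ and sub-stochastic when $c\ge 0$; this bounds $u_I$ above by $\max_{\partial\Omega_h}Q_b g$ in case (a) and by $\max_{\partial\Omega_h}\max(Q_bg,0)$ in case (b). Equivalently, one may run the equality-propagation argument from an interior maximizer out to the boundary using irreducibility.

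The main obstacle I anticipate is the convection term. Because its contribution to the perpendicular-edge off-diagonals is of the same order in $h$ as the stabilizer coupling $-\rho$, it cannot be treated as a lower-order perturbation; keeping every such off-diagonal non-positive simultaneously for all elements and for both cells adjacent to each interior edge is what pins down the precise form of \eqref{EQ:KappaCond} and the smallness requirement on $h$. Verifying these inequalities uniformly, while at the same time preserving positivity of the diagonal and the correct row-sum sign once the $O(h^2)$ reaction entries are included, is the technically delicate step on which the whole argument rests.
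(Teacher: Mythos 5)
Your strategy is genuinely different from the paper's. The paper never assembles a matrix or invokes M-matrix theory; it runs a truncation (de Giorgi-type) contradiction argument: for a level $k$ between the boundary maximum and a presumed larger interior maximum it sets $\theta=(u_b-k)^+$, $\psi=(u_b-k)^-$, shows $\A(u_b,\theta)\le 0$ from the sign of the discretized load (your aspect-ratio observation about \eqref{EQ:MyEQ:209} is exactly the paper's step \eqref{eq.th.dmp.1}), decomposes $\A(u_b,\theta)=\A(\theta,\theta)+\A(\psi,\theta)+(ck,\S(\theta))$, proves the key inequality $\A(\psi,\theta)\ge 0$ elementwise (Lemma \ref{lemma.DMP.1}, where \eqref{EQ:KappaCond} enters), and concludes $\theta\equiv 0$ from the coercivity \eqref{EQ:815:200-0} of Theorem \ref{Lemma:lemma4}. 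The elementwise sign analysis in Lemma \ref{lemma.DMP.1} (equations \eqref{EQ:816:21}--\eqref{EQ:816:28}) is the same computation as your tabulation of off-diagonal entries, so the technical core coincides; but the paper's wrapper avoids everything your route needs beyond the signs---irreducibility of the coupling graph, invertibility of the interior block, and inverse-positivity---at the price of citing coercivity. If completed, your route buys more (a genuine comparison principle and monotonicity of the discrete solution operator), but the connectivity step is not free: at $\kappa=4\alpha$ the parallel couplings vanish identically (this is the 5-point scheme of Section \ref{subsection-5point}), so irreducibility must be argued through the perpendicular couplings alone.

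There is, however, a concrete error in your dominance analysis, and it sits on the step you yourself identify as load-bearing: you mis-scale $\rho$. With $h_x\sim h_y\sim h$ one has $\rho=\kappa h_xh_y/\bigl(2h(h_x+h_y)\bigr)\sim\kappa/4$, which is $O(1)$, not $O(h)$. Consequently the roles of your two sign conditions are swapped. (i) The parallel-edge off-diagonal $-\alpha h_y/h_x+\rho+O(h)$ is \emph{not} non-positive ``because the $O(1)$ diffusion term dominates for small $h$''; it is non-positive only if $\kappa$ is capped, and that cap is precisely what \eqref{EQ:KappaCond} expresses (in the paper's notation, $\alpha|T|^{-1}h_y^2-\eta\ge\cdots$ and $\alpha|T|^{-1}h_x^2-\eta\ge\cdots$). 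The paper's own 7-point stencil \eqref{EQ:weight-7points} makes this explicit: with $\alpha=1$ the parallel weights are $c_1=c_3=\kappa/4-1$, which are \emph{positive} for $\kappa>4$, destroying the M-matrix sign pattern---this is exactly why the paper has no DMP theory for $\kappa=20$ despite favorable numerics. (ii) Conversely, the perpendicular-edge condition $\rho\ge C_0\|\bbeta\|_\infty h+C_1\|c\|_\infty h^2$ is the one that holds automatically for fixed $\kappa>0$ on (quasi-uniform) partitions with $h$ small, since its left side is $O(1)$ and its right side is $O(h)$; it is not ``the content of the hypothesis \eqref{EQ:KappaCond}.'' Because \eqref{EQ:KappaCond} is assumed in the theorem, your proof is repairable: cite \eqref{EQ:KappaCond} for the parallel entries and the $O(1)$-versus-$O(h)$ scaling for the perpendicular ones. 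But as written, the justification of the parallel-edge signs would fail, and with it the entire M-matrix structure.
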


\begin{proof}
Let $K_M = \displaystyle\max_{(x,y)\in\Omega_h}u_b(x,y)$ and $K^* = \displaystyle\max_{(x,y)\in\partial\Omega_h}Q_bg(x,y)$ if $c\equiv 0$ and $K^* = \displaystyle\max_{(x,y)\in\partial\Omega_h}\max(Q_bg(x,y), 0)$ if $c\ge 0$. We shall show $K_M \leq K^*$ through a contradiction argument. To this end, assume $K^*< K_M$ holds true. For any number $k$ satisfying $K^*<k<K_M$, we define
\begin{equation*}
\theta:=(u_b -k)^+=\left\{
\begin{array}{rl}
u_b - k, &\quad \text{ if } u_b > k\\
0, &\quad \text{ otherwise}
\end{array}
\right.
\end{equation*}
and
\begin{equation*}
\psi:=(u_b -k)^-=\left\{
\begin{array}{rl}
0,&\quad \text{ if } u_b > k\\
u_b - k, &\quad\text{ otherwise}.
\end{array}
\right.
\end{equation*}
It is clear that $\theta + \psi= u_b -k$.
Since $\displaystyle\max_{(x,y)\in\partial \Omega_h}u_b(x,y) =K^*<k$, then $u_b\leq k$ on $\partial \Omega$ so that $\theta=0$ on ${\partial \Omega}$. Furthermore, we have $\theta \neq  0$ on some edges in $\Omega$ as $\max_{(x,y)\in\Omega_h} u_b(x,y)= K_M > k$. Thus, from (\ref{EQ.SWGh}), we have
$$
\A(u_b, v_b) = \sum_T(f, \S(v_b))_{T,h}\qquad \forall v_b\in W_h^0(\T_h).
$$
In particular, by letting $v_b=\theta$ we obtain
\begin{equation*}
\A(u_b, \theta) = \sum_T(f, \S(\theta))_{T,h},
\end{equation*}
which, together with (\ref{EQ:MyEQ:209}) and the assumption of $\sigma\in [0.5,2]$, yields
\begin{equation}\label{eq.th.dmp.1}
\begin{split}
\A(u_b, \theta) & =
\frac{|T|}{6}\sum_{i=1}^4 f(M_i)\theta_{b,i} \\
& \ + \frac{|T|}{6(1+\sigma)}f(x_T,y_T)\left( (2-\sigma)(\theta_{b,1}+\theta_{b,2}) + (2\sigma-1)(\theta_{b,3}+\theta_{b,4}) \right),\\
& \le 0,
\end{split}
\end{equation}
where we have used the fact that $\theta_{b,i}\ge 0$, $f\le 0$, $2-\sigma\ge 0$, and $2\sigma-1\ge 0$.
On the other hand, we have
\begin{equation}\label{eq.th.dmp.2}
\begin{split}
\A(u_b, \theta) =& \sum_T \kappa S_T(u_b,\theta) + \sum_T(\alpha \nabla_w u_b, \nabla_w \theta )_T \\
 & + \sum_T (\bbeta\cdot\nabla_w u_b, \S(\theta))_T + \sum_T (c\S(u_b), \S(\theta))_T\\
=&\kappa h^{-1} \sum_T \langle Q_b\S(u_b)-u_{b}, Q_b\S(\theta)-\theta\rangle_\pT + \sum_T(\alpha \nabla_w u_b, \nabla_w \theta)_T\\
&\ + \sum_T (\bbeta\cdot\nabla_w u_b, \S(\theta))_T +\sum_T (c\S(u_b), \S(\theta))_T \\
=&\kappa h^{-1} \sum_T \langle Q_b\S(u_b-k)-(u_{b}-k), Q_b\S(\theta)-\theta\rangle_\pT \\
& \ + \sum_T(\alpha \nabla_w (u_b-k), \nabla_w \theta)_T + \sum_T (\bbeta\cdot\nabla_w (u_b-k), \S(\theta))_T \\
& +\sum_T (c\S(u_b-k), \S(\theta))_T + (ck, \S(\theta))\\
=& \A(u_b -k, \theta) + (ck, \S(\theta)) \\
= & \A(\theta, \theta) + \A(\psi, \theta) + (ck, \S(\theta)).
\end{split}
\end{equation}

Assume the following holds true:
\begin{equation}\label{BIG:assumption}
\A(\psi, \theta) \ge 0.
\end{equation}

For the case of $c=0$, we have from \eqref{eq.th.dmp.2} and \eqref{eq.th.dmp.1} that
\begin{equation}\label{EQ:888}
0\ge \A(u_b,\theta) \ge \A(\theta, \theta),
\end{equation}
which, together with the coercivity inequality \eqref{EQ:815:200-0}, leads to  $\theta \equiv 0$ in $\Omega$ -- a contradiction to the fact that $\theta\neq 0$ on some edges in $\Omega$ as a result of the assumption of $K^*>K_M$. This completes the proof of the discrete maximum principle \eqref{EQ:DMP}.

As to the case of $c\ge 0$, we note that $k>K^* \ge 0$ from the selection of $k$. Furthermore, it is not hard to see that, on each rectangular element $T$, we have
$$
(ck, \S(\theta))_T = \sum_{i=1}^4 ck \theta|_{e_i} \int_T \S(\phi_i) dT \ge 0,
$$
as $ck \theta|_{e_i}\ge 0$ and $\int_T\S(\phi_i) dT >0$ from \eqref{EQ:MyEQ:201}. It follows that the inequality \eqref{EQ:888} again holds true so that $\theta\equiv 0$, which contradicts the fact that $\theta\neq 0$ at some edges. The lemma is thus proved completely.
\end{proof}

\section{A Technical Inequality}\label{Section:TI}
The goal of this section is to verify the validity of the assumption
\eqref{BIG:assumption}. From $\theta=(u_b-k)^+$ and $\psi=(u_b-k)^-$, we may rewrite the assumption as follows:
\begin{equation}\label{BIG:assumption-01}
\A((u_b-k)^-, (u_b-k)^+) \ge 0.
\end{equation}

Let $T\in \T_h$ be a rectangular element depicted in Fig. \ref{fig:rectangular}.
For any $v_b\in W(T)$, define $v_b^+$ and $v_b^-$ as follows
\[
v_b^+ =\max(v_b,0),\; v_b^- =\min(v_b,0).
\]
It is easy to see that
\[
v_b =v_b^+ + v_b^-.
\]
The following lemma provides a version of \eqref{BIG:assumption-01} on the element $T$.
\medskip

\begin{lemma}\label{lemma.DMP.1}
Let $T\in\T_h$ be a rectangular element of size $h_x\times h_y$, and $\sigma=h_x/h_y$ be its aspect ratio. Assume that the stabilization parameter $\kappa$ and the finite element partition $\T_h$ satisfy
\begin{equation}\label{EQ:KappaCond}
\min(\alpha\sigma - \frac{\kappa h_x}{2h(1+\sigma)}, \alpha\sigma^{-1} - \frac{\kappa h_x}{2h(1+\sigma)}) \ge C_0 \|\bbeta\|_\infty h + C_1\|c\|_\infty h^2
\end{equation}
for some prescribed constants $C_0>0$ and $C_1>0$. Then for any $v_b\in W(T)$, the following inequality holds true:
\begin{equation}\label{DMP_inequality}
\kappa S_T(v_b^-, v_b^{+}) + \B_T(v_b^-, v_b^{+}) \ge 0.
\end{equation}
As a result, the inequality \eqref{BIG:assumption-01} holds true for sufficiently small $\kappa$ and sufficiently small meshsize $h$.
\end{lemma}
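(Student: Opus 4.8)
The plan is to convert the element inequality \eqref{DMP_inequality} into a fully explicit scalar statement in the eight nonnegative numbers $p_i:=v_{b,i}^+$ and $n_i:=-v_{b,i}^-$ ($i=1,\dots,4$), which satisfy $p_in_i=0$ edge by edge. Using \eqref{DefWGpoly} and \eqref{EQ:extension} for the rectangle of Fig. \ref{fig:rectangular}, both $\nabla_w v_b^\pm=\big((v^\pm_{b,2}-v^\pm_{b,1})/h_x,\,(v^\pm_{b,4}-v^\pm_{b,3})/h_y\big)$ are constant on $T$, and each of $S_T,a_T,b_T,c_T$ becomes a bilinear expression $\kappa S_T(v_b^-,v_b^+)+\B_T(v_b^-,v_b^+)=\sum_{i\ne j}K_{ij}\,n_ip_j$ (diagonal terms drop because $p_in_i=0$). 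I would first record the sharpening observation that this product-by-product structure is forced: testing \eqref{DMP_inequality} with a $v_b$ that is nonzero only on one negative edge $i$ and one positive edge $j\ne i$ leaves exactly $K_{ij}n_ip_j\ge0$, so \eqref{DMP_inequality} is \emph{equivalent} to $K_{ij}\ge0$ for the four opposite-edge pairs $(1,2),(2,1),(3,4),(4,3)$ and the eight cross pairs. In particular no cancellation between distinct $n_ip_j$ is available, which is precisely why the stabilizer cannot be dropped.

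Next I would assemble the contributions to each $K_{ij}$. For the diffusion form \eqref{EQ:aform}, constancy of the weak gradients gives $a_T(v_b^-,v_b^+)=\big(\int_T\alpha\big)(\gamma_1^-\gamma_1^++\gamma_2^-\gamma_2^+)$, and the scalar identity $a^+a^-=0$ forces $\gamma_1^-\gamma_1^+=(n_1p_2+n_2p_1)/h_x^2\ge0$ and similarly in $y$; hence $a_T$ feeds only the opposite-edge coefficients, with the values $\alpha\sigma^{-1}$ and $\alpha\sigma$ (with $\alpha$ the relevant average). For the stabilizer \eqref{EQ:stabilizer} I would substitute \eqref{S-Property:002}--\eqref{S-Property:008} to collapse it to $\kappa S_T(v_b^-,v_b^+)=\kappa\frac{h_x}{2h(1+\sigma)}D(v_b^-)D(v_b^+)$ with $D(w):=w_{b,1}+w_{b,2}-w_{b,3}-w_{b,4}$; expanding the product with $p_in_i=0$ yields a negative opposite-edge part $-\kappa\frac{h_x}{2h(1+\sigma)}(n_1p_2+n_2p_1+n_3p_4+n_4p_3)$ and a nonnegative cross part $+\kappa\frac{h_x}{2h(1+\sigma)}$ times each cross product. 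Combining diffusion and stabilizer, the opposite-edge coefficients are exactly $\alpha\sigma^{-1}-\kappa\frac{h_x}{2h(1+\sigma)}$ and $\alpha\sigma-\kappa\frac{h_x}{2h(1+\sigma)}$, which are $\ge C_0\|\bbeta\|_\infty h+C_1\|c\|_\infty h^2$ by \eqref{EQ:KappaCond}, while the cross coefficients equal $+\kappa\frac{h_x}{2h(1+\sigma)}\ge0$.

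It then remains to show the convection and reaction forms perturb each $K_{ij}$ by no more than these reserves. For the reaction form \eqref{EQ:cform} I would integrate $\S(v_b^-)\S(v_b^+)$ over the symmetric rectangle: the mixed first moments vanish, leaving a mean piece $\big(\int_T c\big)\gamma_0^-\gamma_0^+$, which perturbs every coefficient by $O(\|c\|_\infty h^2)$, plus gradient pieces that (again via $a^+a^-=0$) add only nonnegative amounts to the opposite-edge coefficients. For the convection form \eqref{EQ:bform}, writing $\bbeta=\bar\bbeta+(\bbeta-\bar\bbeta)$ and using $\int_T(x-x_T)\,\S(v_b^+)=\int_T(y-y_T)\,\S(v_b^+)=0$ reduces the leading term to $(\bar\bbeta\cdot\nabla_w v_b^-)\gamma_0^+|T|$, all of whose coefficients are $O(\|\bbeta\|_\infty h)$, the variable part being $O(\|\bbeta\|_{W^{1,\infty}}h)$ higher order. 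Choosing $C_0$ and $C_1$ larger than these explicit convection and reaction constants absorbs the perturbations on the opposite-edge coefficients, and for the cross coefficients the reserve $\kappa\frac{h_x}{2h(1+\sigma)}$ dominates the $O(\|\bbeta\|_\infty h)+O(\|c\|_\infty h^2)$ perturbation once $h$ is small for fixed $\kappa$. Hence every $K_{ij}\ge0$, which is \eqref{DMP_inequality}; summing over $T\in\T_h$ with $(u_b-k)^\pm$ in place of $v_b^\pm$ gives \eqref{BIG:assumption-01}.

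The main obstacle is the convection term. Unlike diffusion and reaction it is genuinely sign-indefinite and, by the necessity observation above, must be controlled coefficient by coefficient rather than through a single energy bound; moreover its cross-edge products (e.g. the one multiplying $n_1p_3$) are reserved against only by the stabilizer, never by diffusion. This forces the delicate two-sided balance on $\kappa$: small enough that \eqref{EQ:KappaCond} keeps the opposite-edge reserves positive, yet large enough (for fixed $\kappa$ and small $h$) that the cross reserve $\kappa\frac{h_x}{2h(1+\sigma)}$ beats the $O(\|\bbeta\|_\infty h)$ convection perturbation. Pinning down the explicit convection constants and verifying that the variable-$\bbeta$ remainder is truly of higher order is the most technical part of the argument.
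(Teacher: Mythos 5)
Your proposal is correct and follows essentially the same route as the paper's own proof: expand the stabilizer, diffusion, convection and reaction forms in the edge basis, use the edge-wise complementarity $v_{b,i}^+v_{b,i}^-=0$ so that only off-diagonal products survive, identify the opposite-edge reserves $\alpha\sigma^{\pm 1}-\kappa h_x/(2h(1+\sigma))$ and the cross-edge reserve $\kappa h_x/(2h(1+\sigma))$, and absorb convection and reaction as $O(\|\bbeta\|_\infty h)+O(\|c\|_\infty h^2)$ perturbations of each coefficient under \eqref{EQ:KappaCond} (together with smallness of $h$ for fixed $\kappa$ on the cross pairs, a condition the paper also needs but states less explicitly). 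The only slip is cosmetic: $\int_T(x-x_T)\S(v_b^+)\,dT$ does not vanish in general (it equals $\gamma_1^+\int_T(x-x_T)^2\,dT$); what you actually use is $\int_T(x-x_T)\,dT=\int_T(y-y_T)\,dT=0$, which gives $\int_T\S(v_b^+)\,dT=\gamma_0^+|T|$, and the conclusion you draw from it is unaffected.
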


\begin{proof}
Let $T\in\T_h$ be illustrated in Fig. \ref{fig:rectangular}. Denote by $\phi_i$ the local basis function associated with the edge $e_i$; i.e.,
\begin{equation*}
\phi_i=\left\{
\begin{array}{lllll}
1,\qquad \text{ on } e_i,\\
0,\qquad \text{ on } e_j, \, j\neq i.\\
\end{array}
\right.
\end{equation*}
It follows that
\begin{eqnarray*}
v_b^- - \S(v_b^{-}) = \sum^4_{i=1} v_{b,i}^-(\phi_i - \S(\phi_i)) = \sum^4_{i=1} v_{b,i}^-\varphi_i,
\end{eqnarray*}
where, by using \eqref{S-Property:002} and \eqref{S-Property:008}, the function $\varphi_i:=\phi_i-\S(\phi_i)$ can be shown to satisfy
\begin{equation*}
\varphi_{1}= \varphi_{2}=\left\{
\begin{array}{lll}
\displaystyle\frac{h_x}{2(h_x+h_y)}\quad \text{ at mid-points of } e_1 \text{ and } e_2, \\
\displaystyle\frac{-h_y}{2(h_x+h_y)}\quad \text{ at mid-points of } e_3 \text{ and } e_4, \\
\end{array}
\right.
\end{equation*}
and
\begin{equation*}
\varphi_{3}= \varphi_{4}=\left\{
\begin{array}{lllll}
\displaystyle\frac{-h_x}{2(h_x+h_y)} \quad\text{ at mid-points of } e_1 \text{ and } e_2, \\
\displaystyle\frac{h_y}{2(h_x+h_y)} \quad \text{ at mid-points of } e_3 \text{ and } e_4. \\
\end{array}
\right.
\end{equation*}

Thus, with $\eta= \frac{\kappa |T|}{2h(h_x+h_y)}$, the first term on the left-hand side of \eqref{DMP_inequality} can be computed as
\begin{equation}\label{equ.dmp.1}
\begin{split}
 \kappa S_T(v_b^-, v_b^+) = &\ \kappa h^{-1}\langle v_b^- -Q_b\S(v_b^{-}), v_b^{+} - Q_b\S(v_b^{+}) \rangle_{\partial T} \\
 =& \ \kappa h^{-1}\langle v_b^- -Q_b\S(v_b^{-}), v_b^{+} \rangle_{\partial T}\\
 =& \ \kappa h^{-1}\langle v_b^- -\S(v_b^{-}), v_b^{+} \rangle_{\partial T}\\
 =&\ \kappa h^{-1}\sum_{i=1}^{4}v_{b,i}^- \langle \varphi_i, v_{b}^{+}\rangle_{\partial T}\\
 =&-\eta\left( \sum_{i,j=1}^2 v_{b,i}^-  v_{b,2+j}^{+}  + \sum_{i,j=1}^2 v_{b,i+2}^-  v_{b,j}^{+}\right)\\
&\ +\eta\left(v_{b,1}^-  v_{b,2}^{+}  + v_{b,2}^-  v_{b,1}^{+} + v_{b,3}^-  v_{b,4}^{+} + v_{b,4}^-  v_{b,3}^{+}\right).
\end{split}
\end{equation}
Next, from (\ref{DefWGpoly}) for the weak gradient, we have
\begin{eqnarray}
a_T(v_b^-, v_b^+)&=&(\alpha\nabla_w v_b^{-},\nabla_w v_b^{+})_T \nonumber \\
&=&(|T|^{-1}\alpha\sum_{i=1}^{4}v_{b,i}^{-}|e_i|\bm{n}_i,|T|^{-1}
\sum_{j=1}^{4}v_{b,j}^{+}|e_j|\bm{n}_j)\nonumber \\
&=& \alpha |T|^{-1}\sum_{i,j=1}^4 v_{b,i}^{-} v_{b,j}^{+} |e_i|\ |e_j|\bm{n_i}\cdot\bm{n_j}\label{equ.dmp.2}\\
&=& \alpha |T|^{-1}\sum_{i,j=1, i\neq j}^4 v_{b,i}^{-} v_{b,j}^{+}|e_i|\ |e_j| \bm{n_i}\cdot\bm{n_j} \nonumber\\
& =& -\alpha |T|^{-1} h_y^2 \left(v_{b,1}^-v_{b,2}^+ +v_{b,2}^-v_{b,1}^+ \right)
- \alpha |T|^{-1} h_x^2 \left(v_{b,3}^-v_{b,4}^+ +v_{b,4}^-v_{b,3}^+\right). \nonumber
\end{eqnarray}
By combining (\ref{equ.dmp.1}) with (\ref{equ.dmp.2}) we obtain
\begin{equation}\label{EQ:816:21}
\begin{split}
\kappa S_T(v_b^-, v_b^+) + a_T(v_b^-, v_b^+)
= & -\eta\left( \sum_{i=1,j=1}^2 v_{b,i}^-  v_{b,2+j}^{+}  + \sum_{i=1,j=1}^2 v_{b,i+2}^-  v_{b,j}^{+}\right)\\
& - (\alpha |T|^{-1} h_y^2 -\eta) \left(v_{b,1}^-v_{b,2}^+ +v_{b,2}^-v_{b,1}^+ \right) \\
& - (\alpha |T|^{-1} h_x^2 -\eta)\left(v_{b,3}^-v_{b,4}^+ +v_{b,4}^-v_{b,3}^+\right).
\end{split}
\end{equation}

Recall that,  from \eqref{EQ:Fulla}, the bilinear form $\B(\cdot,\cdot)$ is made of three terms:
$$
\B_T(v_b,w_b)=(\alpha\nabla_w v_b, \nabla_w w_b)_T + (\bbeta\cdot\nabla_w v_b, \S(w_b))_T + (c \S(v_b), \S(w_b))_T.
$$
It remains to deal with $(\bbeta\cdot\nabla_w v_b^-, \S(v_b^+))_T$ and $(c \S(v_b^-), \S(v_b^+))_T$. For the former one, we have
\begin{equation}\label{EQ:816:25}
\begin{split}
|b_T(v_b^-, v_b^+)| = & |(\bbeta\cdot\nabla_w v_b^-, \S(v_b^+))_T|\\
= & \left|\sum_{i,j=1, i\neq j}^4 v_{b,i}^- v_{b,j}^+ (\bbeta\cdot \nabla_w\phi_i, \S(\phi_j))_T\right|\\
\leq & C_0\|\bbeta\|_\infty h \left|\sum_{i,j=1, i\neq j}^4 v_{b,i}^- v_{b,j}^+\right|
\end{split}
\end{equation}
for some constant $C_0>0$. As to the later one, we have
\begin{equation}\label{EQ:822:10}
\begin{split}
|c_T(v_b^-, v_b^+)| = & |(c\S(v_b^-), \S(v_b^+))_T|\\
= & \left|\sum_{i,j=1, i\neq j}^4 v_{b,i}^- v_{b,j}^+ (c \S(\phi_i), \S(\phi_j))_T\right|\\
\leq & C_1\|c\|_\infty h^2 \left|\sum_{i,j=1, i\neq j}^4 v_{b,i}^- v_{b,j}^+\right|
\end{split}
\end{equation}
for some constant $C_1$, where we have used the formula \eqref{EQ:MyEQ:201} in the last line of \eqref{EQ:822:10}.

Since $v_{b,i}^{+}v_{b,j}^{-} \leq 0$ for all $1\le i,j \le 4$, then from \eqref{EQ:816:21}, \eqref{EQ:816:25}, and \eqref{EQ:822:10} we obtain
\begin{equation}\label{EQ:816:28}
\begin{split}
&\kappa S_T(v_b^-, v_b^+) + \B_T(v_b^-, v_b^+) \\
= \ & \kappa S_T(v_b^-, v_b^+) + a_T(v_b^-, v_b^+)+b_T(v_b^-, v_b^+)+c_T(v_b^-, v_b^+)\\
\ge\ & \kappa S_T(v_b^-, v_b^+) + a_T(v_b^-, v_b^+) - |b_T(v_b^-, v_b^+)|- |c_T(v_b^-, v_b^+)|\\
\ge\ & (\eta-C_0\|\bbeta\|_\infty h-C_1\|c\|_\infty h^2)\left( \sum_{i=1,j=1}^2 |v_{b,i}^-  v_{b,2+j}^{+}|  + \sum_{i=1,j=1}^2 |v_{b,i+2}^-  v_{b,j}^{+}|\right)\\
& + (\alpha |T|^{-1} h_y^2 -\eta-C_0\|\bbeta\|_\infty h-C_1\|c\|_\infty h^2) \left(|v_{b,1}^-v_{b,2}^+| +|v_{b,2}^-v_{b,1}^+| \right) \\
& + (\alpha |T|^{-1} h_x^2 -\eta- C_0\|\bbeta\|_\infty h-C_1\|c\|_\infty h^2)\left(|v_{b,3}^-v_{b,4}^+| +|v_{b,4}^-v_{b,3}^+|\right)\\
\ge \ & 0,
\end{split}
\end{equation}
provided that $\alpha$, $\kappa$, and the rectangular partition $\T_h$ satisfy the following conditions:
\begin{eqnarray*}
\eta & \ge & C_0\|\bbeta\|_\infty h + C_1\|c\|_\infty h^2,\\
\alpha |T|^{-1} h_y^2 -\eta & \ge & C_0\|\bbeta\|_\infty h+C_1\|c\|_\infty h^2,\\
\alpha |T|^{-1} h_x^2 -\eta & \ge & C_0\|\bbeta\|_\infty h+C_1\|c\|_\infty h^2 .
\end{eqnarray*}
The above inequalities hold true under the assumption \eqref{EQ:KappaCond}. This completes the proof of Lemma \ref{lemma.DMP.1}.
\end{proof}

\begin{remark} For square elements $T$, one has $h_x=h_y$ and $\sigma=1$. Thus, by letting $h=h_x$, the condition \eqref{EQ:KappaCond} becomes to be
$$
0 < \kappa \le 4\alpha - 4C_0\|\bbeta\|_\infty h - 4 C_1\|c\|_\infty h^2,
$$
which is easily satisfied for sufficiently small $h$ provided that $\kappa < 4\alpha$. For general rectangular partitions $\T_h$, one may choose $h=2|T|/(h_x+h_y)$ so that the condition \eqref{EQ:KappaCond} becomes to be
$$
0\leq \kappa \leq 4\alpha \min(\sigma, \sigma^{-1})-4C_0\|\bbeta\|_\infty h - 4 C_1\|c\|_\infty h^2,
$$
which is satisfied for $0\leq \kappa < 4\alpha \min(\sigma, \sigma^{-1})$ and  sufficiently small meshsize $h$.
\end{remark}

\section{SWG on Cartesian Grids}\label{sectionFDSWG}
In this section we consider a special case of the SWG finite element method for the model problem \eqref{ellipticbdy}-\eqref{ellipticbc} associated with Cartesian grids (i.e., rectangular partitions). The goal is to derive a finite difference formulation for the simplified weak Galerkin when applied to \eqref{ellipticbdy}-\eqref{ellipticbc}. For simplicity, assume that $\Omega$ is the unit square domain $\Omega= (0,1)\times(0,1)$, and the coefficients in the PDE are given by $\alpha=1, \ \bbeta=0,\ c=0$.

A uniform square partition of the domain can be constructed as the Cartesian product of two one-dimensional grids:
\begin{eqnarray*}
x_{i+\mu}&=&(i+\mu-0.5)h,\qquad i=0,1,\cdots, n,\quad \mu=0,1/2,\\
y_{j+\mu}&=&(j+\mu-0.5)h, \qquad j=0,1,\cdots, n,\quad \mu=0,1/2,
\end{eqnarray*}
where $n$ is a positive integer and $h=1/n$ is the meshsize. Denote by
$$
T_{ij}:= [x_{i-\frac12}, x_{i+\frac12}]\times [y_{j-\frac12}, y_{j+\frac12}],\qquad i,j=1,\ldots, n
$$
the square element centered at $(x_i,y_j)$ for $i,j=1,\cdots, n$. The collection of all such elements forms a uniform square partition $\T_h$ of the domain. The collection of all the element edges is denoted as $\E_h$.

\subsection{A 7-point finite difference scheme}

Let $u_{k\ell}$ be the approximation of the unknown function $u$ at the mid-point $(x_k, y_\ell)\in\E_h$ (i.e., the dotted points colored in red in Figure \ref{Fig.DF.location}). It can be seen that a red-dotted grid point $(x_k, y_\ell)$ is located on the boundary if either $k$ or $\ell$ takes the value of $\frac12$ or $n+\frac12$. The following is the main result of this section.
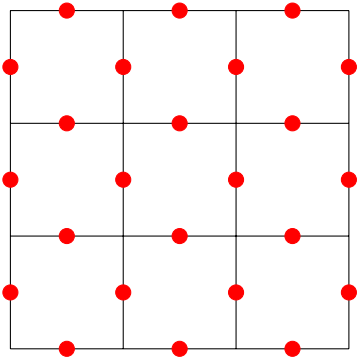
\begin{figure}[!h]
\begin{center}
\begin{tikzpicture}
    \path (0.75,0) coordinate (A11);
    \path (2.25,0) coordinate (A12);
    \path (3.75,0) coordinate (A13);

    \path (0.75,1.5) coordinate (A21);
    \path (2.25,1.5) coordinate (A22);
    \path (3.75,1.5) coordinate (A23);

    \path (0.75,3) coordinate (A31);
    \path (2.25,3) coordinate (A32);
    \path (3.75,3) coordinate (A33);

    \path (0.75,4.5) coordinate (A41);
    \path (2.25,4.5) coordinate (A42);
    \path (3.75,4.5) coordinate (A43);

    \path (0,0.75)   coordinate (B11);
    \path (1.5,0.75) coordinate (B12);
    \path (3,0.75)   coordinate (B13);
    \path (4.5,0.75) coordinate (B14);

    \path (0,2.25)   coordinate (B21);
    \path (1.5,2.25) coordinate (B22);
    \path (3.0,2.25) coordinate (B23);
    \path (4.5,2.25) coordinate (B24);

    \path (0,3.75)   coordinate (B31);
    \path (1.5,3.75) coordinate (B32);
    \path (3.0,3.75) coordinate (B33);
    \path (4.5,3.75) coordinate (B34);

    \draw[step=1.5] (0,0) grid (4.5,4.5);

    \filldraw [fill=red,draw=red] (A11) circle (0.1cm);
    \filldraw [fill=red,draw=red] (A12) circle (0.1cm);
    \filldraw [fill=red,draw=red] (A13) circle (0.1cm);

    \filldraw [fill=red,draw=red] (A21) circle (0.1cm);
    \filldraw [fill=red,draw=red] (A22) circle (0.1cm);
    \filldraw [fill=red,draw=red] (A23) circle (0.1cm);

    \filldraw [fill=red,draw=red] (A31) circle (0.1cm);
    \filldraw [fill=red,draw=red] (A32) circle (0.1cm);
    \filldraw [fill=red,draw=red] (A33) circle (0.1cm);

    \filldraw [fill=red,draw=red] (A41) circle (0.1cm);
    \filldraw [fill=red,draw=red] (A42) circle (0.1cm);
    \filldraw [fill=red,draw=red] (A43) circle (0.1cm);

    \filldraw [fill=red,draw=red] (B11) circle (0.1cm);
    \filldraw [fill=red,draw=red] (B12) circle (0.1cm);
    \filldraw [fill=red,draw=red] (B13) circle (0.1cm);
    \filldraw [fill=red,draw=red] (B14) circle (0.1cm);

    \filldraw [fill=red,draw=red] (B21) circle (0.1cm);
    \filldraw [fill=red,draw=red] (B22) circle (0.1cm);
    \filldraw [fill=red,draw=red] (B23) circle (0.1cm);
    \filldraw [fill=red,draw=red] (B24) circle (0.1cm);

    \filldraw [fill=red,draw=red] (B31) circle (0.1cm);
    \filldraw [fill=red,draw=red] (B32) circle (0.1cm);
    \filldraw [fill=red,draw=red] (B33) circle (0.1cm);
    \filldraw [fill=red,draw=red] (B34) circle (0.1cm);
\end{tikzpicture}
\caption{\label{Fig.DF.location}Grid points of the finite difference scheme.}
\end{center}
\end{figure}

\begin{theorem}\label{DF-equi-SWG-new}
The simplified weak Galerkin scheme \eqref{equation.SWG} on the uniform square partition $\{T_{ij}\}$ for the model problem \eqref{ellipticbdy}-\eqref{ellipticbc} with $\alpha=1, \ \bbeta=0,$ and $c=0$ is algebraically equivalent to the following finite difference scheme: (1) $\{u_{i,j}\}|_{\partial\Omega_h} = Q_b(g)$, and (2) the following equation is satisfied at each interior node:
\begin{equation}\label{equ.scheme-DF-SWG}
\left\{
\begin{split}
&c_1 u_{i+\frac{3}{2},j}+ c_2 u_{i+\frac{1}{2},j}+c_3 u_{i-\frac{1}{2},j} + c_4(u_{i+1,j-\frac{1}{2}}+u_{i+1,j+\frac{1}{2}} +u_{i,j-\frac{1}{2}}+u_{i,j+\frac{1}{2}} ) \\
&\ =\displaystyle\frac{h^2}{2}f_{i+\frac{1}{2},j},\\
&c_1 u_{i,j+\frac{3}{2}}+ c_2 u_{i,j+\frac{1}{2}}+c_3 u_{i,j-\frac{1}{2}} + c_4(u_{i-\frac{1}{2},j+1}+u_{i+\frac{1}{2},j+1} +u_{i-\frac{1}{2},j}+u_{i+\frac{1}{2},j}) \\ &\ =\displaystyle\frac{h^2}{2}f_{i,j+\frac{1}{2}}, \\
\end{split}
\right.
\end{equation}
where
$ c_1=c_3=\displaystyle(\frac{\kappa}{4}-1),\; c_2=\displaystyle\frac{\kappa}{2}+2,\; c_4=\displaystyle-\frac{\kappa}{4}$, $\kappa>0$ is the stabilization parameter, $f_{i+\frac{1}{2},j}=f(x_{i+\frac{1}{2}},y_j)$, $f_{i,j+\frac{1}{2}} =f(x_i,y_{j+\frac{1}{2}}) $.
\end{theorem}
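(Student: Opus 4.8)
The plan is to assemble the global SWG stiffness matrix element by element and then read off the stencil produced at each interior edge. First I would specialize the data to $\alpha=1$, $\bbeta=0$, $c=0$, so that $\A(\cdot,\cdot)=\kappa S(\cdot,\cdot)+\sum_{T}(\nabla_w\cdot,\nabla_w\cdot)_T$ and the load is $\sum_T(f,\S(v_b))_T$. On a square element $T$ of side $h$ (hence $\sigma=1$) with the edge labeling of Fig.~\ref{fig:rectangular}, formula \eqref{DefWGpoly} gives $\nabla_w v_b=h^{-1}(v_{b,2}-v_{b,1},\,v_{b,4}-v_{b,3})$, so that
$$
(\nabla_w u_b,\nabla_w v_b)_T=(u_{b,2}-u_{b,1})(v_{b,2}-v_{b,1})+(u_{b,4}-u_{b,3})(v_{b,4}-v_{b,3}).
$$
For the stabilizer I would substitute $\sigma=1$ into \eqref{S-Property:002}--\eqref{S-Property:008}, which gives $\S(v_b)(M_i)-v_{b,i}=-\tfrac14 D_v$ on $e_1,e_2$ and $+\tfrac14 D_v$ on $e_3,e_4$, where $D_v:=v_{b,1}+v_{b,2}-v_{b,3}-v_{b,4}$ (and $D_u$ defined analogously); substituting into \eqref{EQ:stabilizer} collapses it to $S_T(u_b,v_b)=\tfrac14 D_u D_v$.

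Next I would identify the four local edge values of $T_{ij}$ with the global midpoint unknowns, $(u_{b,1},u_{b,2},u_{b,3},u_{b,4})=(u_{i-\frac12,j},u_{i+\frac12,j},u_{i,j-\frac12},u_{i,j+\frac12})$, and derive the equation at an interior vertical edge by testing \eqref{equation.SWG} against the basis function $\phi$ supported on the edge with midpoint $(x_{i+\frac12},y_j)$. Since this edge is $e_2$ of $T_{ij}$ and $e_1$ of $T_{i+1,j}$, only those two elements contribute. Evaluating $\kappa S_T+a_T$ on each with the corresponding unit test vector and summing, the diagonal term $u_{i+\frac12,j}$ receives $\tfrac{\kappa}{2}+2$, the in-line neighbors $u_{i+\frac32,j}$ and $u_{i-\frac12,j}$ each receive $\tfrac{\kappa}{4}-1$, and the four transverse midpoints $u_{i,j\pm\frac12}$, $u_{i+1,j\pm\frac12}$ each receive $-\tfrac{\kappa}{4}$; this is exactly the first line of \eqref{equ.scheme-DF-SWG} with $c_1=c_3=\tfrac{\kappa}{4}-1$, $c_2=\tfrac{\kappa}{2}+2$, $c_4=-\tfrac{\kappa}{4}$. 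The second line then follows verbatim from the $x\leftrightarrow y$ symmetry of the element formulas.

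For the right-hand side I would use \eqref{EQ:MyEQ:201} with $\sigma=1$ together with the antisymmetry of $x-x_T$ about the element center to obtain $\int_T\S(\phi_i)\,dT=\tfrac{h^2}{4}$; evaluating $f$ at the shared edge midpoint on each of the two adjacent elements then yields the two load contributions $\tfrac{h^2}{4}f_{i+\frac12,j}$, which sum to $\tfrac{h^2}{2}f_{i+\frac12,j}$. The Dirichlet prescription $u_b|_{\partial\Omega}=Q_b(g)$ is precisely item (1).

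I expect the only real difficulty to be the assembly bookkeeping: one must keep the local-to-global edge correspondence and the orientation-dependent signs in $\nabla_w$ consistent between the two elements sharing the tested edge. The delicate point is that the transverse coupling $c_4$ comes \emph{entirely} from the stabilizer, since the weak-gradient term contributes nothing in the direction orthogonal to a vertical-edge test function, while the single jump combination $D_u$ couples all four edges of an element at once; a sign slip there would simultaneously corrupt the reinforcement of the $\tfrac{\kappa}{4}$ self-terms into $c_2$ and the sign of $c_4$.
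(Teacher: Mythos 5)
Your proposal is correct and follows essentially the same route as the paper: compute the stabilizer and weak-gradient forms on a square element, test against the basis function of the shared edge, and sum the contributions of the two adjacent elements $T_{i,j}$ and $T_{i+1,j}$, which yields exactly $c_1=c_3=\tfrac{\kappa}{4}-1$, $c_2=\tfrac{\kappa}{2}+2$, $c_4=-\tfrac{\kappa}{4}$, with the transverse coupling indeed coming only from the stabilizer. The one place you diverge is the load term: the paper applies Simpson's rule in $x$ and the midpoint rule in $y$ to obtain $\tfrac{h^2}{2}f_{i+\frac12,j}+\O(h^4)$, whereas your one-point rule $f_{i+\frac12,j}\int_T\S(\phi_i)\,dT=\tfrac{h^2}{4}f_{i+\frac12,j}$ per element reproduces the same value but matches that $\O(h^4)$ accuracy only via the (unstated) evenness of $\S(\phi_i)$ about the shared edge midpoint---naively it is only $\O(h^3)$---a harmless difference here, since the asserted algebraic equivalence concerns the stiffness matrix.
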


The rest of this subsection will be devoted to a detailed derivation of the finite difference scheme \eqref{equ.scheme-DF-SWG}.

Let $v_b$ be the basis function of $W(T)$ corresponding to the edge $e_1$ of $T$ (see Fig. \ref{fig:rectangular}); i.e.,
\begin{equation*}
v_b=\left\{
\begin{array}{lllll}
1,\qquad \text{ on } e_1,\\
0,\qquad \text{ on } e_i, \, i=2,3,4.\\
\end{array}
\right.
\end{equation*}
Since $|e_1|=|e_3|$ for square elements, from \eqref{S-Property:002} and \eqref{S-Property:008} we have
\begin{eqnarray*}
(v_b-\S(v_b))(M_1) &=&(v_b-\S(v_b))(M_2) =\displaystyle\frac{1}{4},\\
(v_b-\S(v_b))(M_3) &=&(v_b-\S(v_b))(M_4) =-\displaystyle\frac{1}{4}.
\end{eqnarray*}
Hence
\begin{equation}\label{square_stab}
\begin{split}
S_T(u_b,v_b)=&h^{-1}\sum_{i=1}^4 (\S(u_b)(M_i)-u_{b,i})(\S(v_b)(M_i)-v_{b,i})|e_i|\\
            =&h^{-1}\sum_{i=1}^4 (u_{b,i}-\S(u_b)(M_i))v_{b,i}|e_i|\\
            =&\displaystyle\frac{1}{4}(u_{b,1}+u_{b,2} -u_{b,3}-u_{b,4}),
            \end{split}
\end{equation}
and
\begin{equation}\label{square_grad}
\begin{split}
(\nabla_w u_b, \nabla_w v_b)_{T} = &\left(\displaystyle\frac{1}{|T|}\sum_{i=1}^4 u_{b,i}|e_i|\bm{n}_i,\displaystyle\frac{1}{|T|}\sum_{i=1}^4 v_{b,i}|e_i|\bm{n}_i\right) \\
=&\left(
\displaystyle\frac{1}{h}\left[
\begin{array}{lllll}
u_{b,2} - u_{b,1} \\
u_{b,4} - u_{b,3} \\
\end{array}
\right],
\displaystyle\frac{1}{h}\left[
\begin{array}{lllll}
0 - 1\\
0 - 0\\
\end{array}
\right]
\right)_T \\
=&-|T|(\frac{1}{h})^2(u_{b,2}-u_{b,1}) \\
=&u_{b,1}-u_{b,2}.
\end{split}
\end{equation}
Equations \eqref{square_stab} and \eqref{square_grad} comprise the discrete scheme corresponding to the basis function $v_b$ on edge $e_1$ of the element $T$:
\begin{equation}\label{Element_Stencil}
\frac{\kappa}{4}(u_{b,1}+u_{b,2} -u_{b,3}-u_{b,4})+ u_{b,1}-u_{b,2} \approx (f, \S(v_b))_T.
\end{equation}
The two local equations \eqref{Element_Stencil} corresponding to the elements $T_{i,j}$ and $T_{i+1,j}$ that share $e_{i+\frac{1}{2},j}$ as a common edge (see Fig. \ref{Fig.sub.1.ldofswgx}) are given by
\begin{eqnarray}
&& \frac{\kappa}{4}(u_{i+\frac{1}{2},j}+u_{i+\frac{3}{2},j} -u_{i+1,j-\frac{1}{2}}-u_{i+1,j+\frac{1}{2}})+ u_{i+\frac{1}{2},j}-u_{i+\frac{3}{2},j}\nonumber\\
&\approx &(f, \S(v_b))_{T_{i+1,j}},\label{equ.ES.1}\\
&&\frac{\kappa}{4}(u_{i+\frac{1}{2},j}+u_{i-\frac{1}{2},j} -u_{i,j-\frac{1}{2}}-u_{i,j+\frac{1}{2}})+ u_{i+\frac{1}{2},j}-u_{i-\frac{1}{2},j}\nonumber\\
&\approx & (f,\S(v_b))_{T_{i,j}}, \label{equ.ES.2}
\end{eqnarray}

\begin{figure}[h!]
\begin{center}
\subfigure[Stencil for $u_{i+\frac{1}{2},j}$ with any $\kappa>0$]{
\label{Fig.sub.1.ldofswgx}
\tikzset{global scale/.style={
         scale=#1,
         every node/.append style ={scale=#1}
         }
}
\begin{tikzpicture}[global scale =0.75]
\draw[step=3] (0,0) grid (6,3);

\path (0,1.5) coordinate (A11);
\path (3,1.5) coordinate (A12);
\path (6,1.5) coordinate (A13);

\path (1.5,0) coordinate (B11);
\path (4.5,0) coordinate (B12);

\path (1.5,3) coordinate (B21);
\path (4.5,3) coordinate (B22);

    \draw [fill=red] (A11) circle (0.1cm);
    \draw [fill=blue] (A12) circle(0.15cm);
    \draw [fill=red] (A13) circle (0.1cm);

    \draw [fill=red] (B11) circle (0.1cm);
    \draw [fill=red] (B12) circle (0.1cm);

    \draw [fill=red] (B21) circle (0.1cm);
    \draw [fill=red] (B22) circle (0.1cm);

    \path (1.5,1.5) coordinate (center1);
    \path (4.5,1.5) coordinate (center2);

    \draw node[left] at (A11) {$i-\frac{1}{2},j$};
    \draw node[above] at (A12) {$u_{i+\frac{1}{2},j}$};
    \draw node[right] at (A13) {$i+\frac{3}{2},j$};

    \draw node[below] at (B11) {$i,j-\frac{1}{2}$};
    \draw node[below] at (B12) {$i+1,j-\frac{1}{2}$};
    \draw node[above] at (B21) {$i,j+\frac{1}{2}$};
    \draw node[above] at (B22) {$i+1,j+\frac{1}{2}$};

    \draw node at (center1) {$T_{i,j}$};
    \draw node at (center2) {$T_{i+1,j}$};

\end{tikzpicture}
}
~~
\subfigure[Stencil for $u_{i,j+\frac{1}{2}}$ with any $\kappa>0$]{
\label{Fig.sub.1.ldofswgy}
\tikzset{global scale/.style={
         scale=#1,
         every node/.append style ={scale=#1}
         }
}
\begin{tikzpicture}[global scale =0.66]
\draw[step=3] (0,0) grid (3,6);

\path (1.5,0) coordinate (A11);
\path (1.5,3) coordinate (A12);
\path (1.5,6) coordinate (A13);

\path (0,1.5) coordinate (B11);
\path (0,4.5) coordinate (B12);

\path (3,1.5) coordinate (B21);
\path (3,4.5) coordinate (B22);

    \draw [fill=red] (A11) circle (0.1cm);
    \draw [fill=blue] (A12) circle (0.15cm);
    \draw [fill=red] (A13) circle (0.1cm);

    \draw [fill=red] (B11) circle (0.1cm);
    \draw [fill=red] (B12) circle (0.1cm);

    \draw [fill=red] (B21) circle (0.1cm);
    \draw [fill=red] (B22) circle (0.1cm);

    \path (1.5,1.5) coordinate (center1);
    \path (1.5,4.5) coordinate (center2);

    \draw node[below]  at (A11) {$i,j-\frac{1}{2}$};
    \draw node[above]  at (A12) {$u_{i,j+\frac{1}{2}}$};
    \draw node[above]  at (A13) {$i,j+\frac{3}{2}$};

    \draw node[left] at (B11)  {$i-\frac{1}{2},j$};
    \draw node[left] at (B12)  {$i-\frac{1}{2},j+1$};
    \draw node[right] at (B21) {$i+\frac{1}{2},j$};
    \draw node[right] at (B22) {$i+\frac{1}{2},j+1$};

    \draw node at (center1) {$T_{i,j}$};
    \draw node at (center2) {$T_{i,j+1}$};

\end{tikzpicture}
}
\end{center}
\caption{\label{Fig.DF.stencils}Stencils for the 7-point finite difference scheme \eqref{equ.scheme-DF-SWG}.}
\end{figure}

Summing up the equations (\ref{equ.ES.1}) and (\ref{equ.ES.2}) yields the following global linear equation corresponding to the degree of freedom $u_{i+\frac{1}{2},j}$:
\begin{equation}\label{equ.DFSWG.1}
\begin{split}
&\frac{\kappa}{4}(u_{i+\frac{3}{2},j}+ 2u_{i+\frac{1}{2},j}+u_{i-\frac{1}{2},j} -u_{i+1,j-\frac{1}{2}}-u_{i+1,j+\frac{1}{2}} -u_{i,j-\frac{1}{2}}-u_{i,j+\frac{1}{2}})\\
&\ + 2u_{i+\frac{1}{2},j}-u_{i-\frac{1}{2},j}-u_{i+\frac{3}{2},j}\\ =&\int_{T_{i,j}\bigcup T_{i+1,j}} f(x,y) \S(v_b)dxdy.
\end{split}
\end{equation}

The right-hand side of \eqref{equ.DFSWG.1} can be approximated by using numerical integrations (the Simpson rule in the $x$- direction and the midpoint rule in the $y$- direction) as follows:
\begin{equation}\label{equ.Simpson}
\begin{split}
&\int_{T_{i,j}\bigcup T_{i+1,j}} f \S(v_b)dT \\ =&\int_{T_{i,j}}f\S(v_{b,(i+\frac{1}{2},j)}) dT+ \int_{T_{i+1,j}}f\S(v_{b,(i+\frac{1}{2},j)})dT \\
=&\frac{h^2}{6}[f \S(v_{b})(M_{i-\frac{1}{2},j}) + 4f\S(v_{b})(M_{i,j}) + f\S(v_{b})(M_{i+\frac{1}{2},j})] \\
&\ +\frac{h^2}{6}[f \S(v_{b})(M_{i+\frac{1}{2},j}) + 4f\S(v_{b})(M_{i+1,j}) + f\S(v_{b})(M_{i+\frac{3}{2},j})]+\O(h^4) \\
=& \frac{h^2}{6}[-\frac{1}{4}f_{i-\frac{1}{2},j} + f_{i,j} + \frac{3}{4}f_{i+\frac{1}{2},j}]+\frac{h^2}{6}[\frac{3}{4}
f_{i+\frac{1}{2},j} + f_{i+1,j} - \frac{1}{4}f_{i+\frac{3}{2},j}]+\O(h^4)\\
=&\frac{h^2}{24}[-f_{i-\frac{1}{2},j} + 4f_{i,j} +6f_{i+\frac{1}{2},j} +4f_{i+1,j}-f_{i+\frac{3}{2},j}] +\O(h^4) \\
= &\frac{h^2}{2}f_{i+\frac{1}{2},j} + \O(h^4),
\end{split}
\end{equation}
where we have used the following formula:
\begin{eqnarray*}
\S(v_{b,(i+\frac{1}{2},j)})|_{T_{i,j}} &=&\frac{1}{4} + \frac{1}{h}(x-x_{T_{i,j}}),\\
\S(v_{b,(i+\frac{1}{2},j)})|_{T_{i+1,j}} &=&\frac{1}{4} - \frac{1}{h}(x-x_{T_{i+1,j}}).
\end{eqnarray*}
Thus, we may rewrite \eqref{equ.DFSWG.1} as follows:
\begin{equation}\label{equ.DFSWG.2}
\begin{split}
& \frac{\kappa}{4}(u_{i+\frac{3}{2},j}+ 2u_{i+\frac{1}{2},j}+u_{i-\frac{1}{2},j} -u_{i+1,j-\frac{1}{2}}-u_{i+1,j+\frac{1}{2}} -u_{i,j-\frac{1}{2}}-u_{i,j+\frac{1}{2}})\\
&\quad + 2u_{i+\frac{1}{2},j}-u_{i-\frac{1}{2},j}-u_{i+\frac{3}{2},j}
=\frac{h^2}{2}f_{i+\frac{1}{2},j} + \O(h^4).
\end{split}
\end{equation}
Analogously, for the degree of freedom $u_{i,j+\frac{1}{2}}$, we have
\begin{equation}\label{equ.DFSWG.3}
\begin{split}
& \frac{\kappa}{4}(u_{i,j+\frac{3}{2}}+ 2u_{i,j+\frac{1}{2}}+u_{i,j-\frac{1}{2}} -u_{i-\frac{1}{2},j+1}-u_{i+\frac{1}{2},j+1} -u_{i-\frac{1}{2},j}-u_{i+\frac{1}{2},j})\\
&\quad + 2u_{i,j+\frac{1}{2}}-u_{i,j-\frac{1}{2}}-u_{i,j+\frac{3}{2}}
=\frac{h^2}{2}f_{i,j+\frac{1}{2}}+ \O(h^4).
\end{split}
\end{equation}

The stencil for the unknown $u$, or more precisely for $u_{i+\frac{1}{2},j}$ (respectively $u_{i,j+\frac{1}{2}}$ ) is the seven dotted-points shown in Fig. \ref{Fig.DF.stencils}(a) (respectively Fig. \ref{Fig.DF.stencils}(b)) with weights
\begin{equation}\label{EQ:weight-7points}
A=(\displaystyle\frac{\kappa}{4}-1,\displaystyle\frac{\kappa}{2}+2,
\displaystyle\frac{\kappa}{4}-1,-\displaystyle\frac{\kappa}{4},
-\displaystyle\frac{\kappa}{4},-\displaystyle\frac{\kappa}{4},
-\displaystyle\frac{\kappa}{4})
\end{equation}
for $\kappa>0$.

A similar 7-point finite difference scheme can be derived by following the same calculation for general nonuniform rectangular partitions, for which the stencil should be modified accordingly; details are left to interested readers as an exercise.

\subsection{A 5-point finite difference scheme}\label{subsection-5point}

For the particular value of $\kappa=4$, the weights in \eqref{EQ:weight-7points} for the 7-point stencil become to be
\begin{equation*}\label{EQ:weight-7points-s}
A=(0, 4, 0, -1, -1, -1, -1)
\end{equation*}
so that \eqref{equ.scheme-DF-SWG} is reduced to a 5-point finite difference scheme described as follows:

\begin{FD-algorithm}
On the set of grid points $\Omega_h$, find $\{u_{k\ell}\}$ satisfying (1)
the discrete Dirichlet boundary condition of $\{u_{k\ell}\}|_{\partial\Omega_h}=Q_b g$ at all the red-dotted grid points $(x_k, y_\ell)$ on the domain boundary, and (2) the following set of linear equations:
\begin{equation}\label{equ.Stokes_FDS}
\left\{
\begin{split}
\frac{4u_{i+\frac{1}{2},j}-u_{i+1,j-\frac{1}{2}} - u_{i+1,j+\frac{1}{2}} - u_{i,j-\frac{1}{2}}- u_{i,j+\frac{1}{2}}}{h^2}
&=\frac{1}{2}f_{i+\frac{1}{2},j},\\
\frac{4u_{i,j+\frac{1}{2}}-u_{i-\frac{1}{2},j+1} - u_{i+\frac{1}{2},j+1} - u_{i-\frac{1}{2},j}- u_{i+\frac{1}{2},j}}{h^2}
&=\frac12 f_{i, j+\frac{1}{2}},\\
\end{split}
\right.
\end{equation}
where
$$
f_{k,\ell}:=f(x_k,y_\ell).
$$
\end{FD-algorithm}

\medskip

\begin{figure}[!h]
\begin{center}
\subfigure[Stencil for ${u}_{i+\frac{1}{2},j}$]{
\label{Fig.sub.2.ldofswgx}
\tikzset{global scale/.style={
         scale=#1,
         every node/.append style ={scale=#1}
         }
}
\begin{tikzpicture}[global scale =0.66]
\draw[step=3] (0,0) grid (6,3);
\path (0,1.5) coordinate (A11);
\path (3,1.5) coordinate (A12);
\path (6,1.5) coordinate (A13);

\path (1.5,0) coordinate (B11);
\path (4.5,0) coordinate (B12);

\path (1.5,3) coordinate (B21);
\path (4.5,3) coordinate (B22);

    \draw [fill=red] (A12) circle(0.15cm);

    \draw [fill=red] (B11) circle (0.15cm);
    \draw [fill=red] (B12) circle (0.15cm);

    \draw [fill=red] (B21) circle (0.15cm);
    \draw [fill=red] (B22) circle (0.15cm);

    \path (1.5,1.5) coordinate (center1);
    \path (4.5,1.5) coordinate (center2);

    \draw node[above] at (A12) {$\bm{u}_{i+\frac{1}{2},j}$};

    \draw node[below] at (B11) {$i,j-\frac{1}{2}$};
    \draw node[below] at (B12) {$i+1,j-\frac{1}{2}$};
    \draw node[above] at (B21) {$i,j+\frac{1}{2}$};
    \draw node[above] at (B22) {$i+1,j+\frac{1}{2}$};

    \draw node at (center1) {$T_{i,j}$};
    \draw node at (center2) {$T_{i+1,j}$};

\end{tikzpicture}
}
~~~~
\subfigure[Stencil for ${u}_{i,j+\frac{1}{2}}$]{
\label{Fig.sub.2.ldofswgy}
\tikzset{global scale/.style={
         scale=#1,
         every node/.append style ={scale=#1}
         }
}
\begin{tikzpicture}[global scale =0.66]
\draw[step=3] (0,0) grid (3,6);

\path (1.5,0) coordinate (A11);
\path (1.5,3) coordinate (A12);
\path (1.5,6) coordinate (A13);

\path (0,1.5) coordinate (B11);
\path (0,4.5) coordinate (B12);

\path (3,1.5) coordinate (B21);
\path (3,4.5) coordinate (B22);

    \draw [fill=red] (A12) circle(0.15cm);

    \draw [fill=red] (B11) circle (0.15cm);
    \draw [fill=red] (B12) circle (0.15cm);

    \draw [fill=red] (B21) circle (0.15cm);
    \draw [fill=red] (B22) circle (0.15cm);

    \path (1.5,1.5) coordinate (center1);
    \path (1.5,4.5) coordinate (center2);

    \draw node[above]  at (A12) {$\bm{u}_{i,j+\frac{1}{2}}$};

    \draw node[left] at (B11)  {$i-\frac{1}{2},j$};
    \draw node[left] at (B12)  {$i-\frac{1}{2},j+1$};
    \draw node[right] at (B21) {$i+\frac{1}{2},j$};
    \draw node[right] at (B22) {$i+\frac{1}{2},j+1$};

    \draw node at (center1) {$T_{i,j}$};
    \draw node at (center2) {$T_{i,j+1}$};

\end{tikzpicture}
}
\end{center}
\caption{\label{Fig.DF.stencil2}Stencils for the 5-point finite difference scheme \eqref{equ.Stokes_FDS}.}
\end{figure}

Figure \ref{Fig.DF.stencil2} shows the stencil of  \eqref{equ.Stokes_FDS} as a five-point finite difference scheme with weights $\tilde A=(4,-1,-1,-1,-1)$; the left figure is for the scheme centered at $(x_{i+\frac12,j}, y_j)$ and the right one is for $(x_{i}, y_{j+\frac12})$.

Due to the connection between the finite difference scheme \eqref{equ.scheme-DF-SWG} and the SWG finite element method, we have the following result for the solvability of the finite difference method.
\begin{theorem}\label{DF_uniq}
For any given stabilization parameter $\kappa>0$, the finite difference scheme \eqref{equ.scheme-DF-SWG} has one and only one solution. The same conclusion holds true for the five-point finite difference scheme \eqref{equ.Stokes_FDS}. In addition, the discrete maximum principle Theorem \ref{Th.DMP.squaremesh} holds true for the numerical solutions obtained from the finite difference schemes.
\end{theorem}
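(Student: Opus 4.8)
The plan is to exploit the algebraic equivalence established in Theorem \ref{DF-equi-SWG-new}, which identifies the coefficient matrix of the finite difference scheme \eqref{equ.scheme-DF-SWG} with the global stiffness matrix of the SWG bilinear form $\A(\cdot,\cdot)$ in the pure diffusion case $\alpha=1$, $\bbeta=0$, $c=0$. Since that identification of the left-hand sides is exact (the $\O(h^4)$ discrepancy lives only in the load), both the solvability and the discrete maximum principle for the finite difference schemes become transfer statements about the SWG linear system, and no new computation is needed beyond checking that the hypotheses of the earlier results hold on uniform square grids.

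For existence and uniqueness I would first observe that the finite difference scheme is a square linear system, with exactly one equation per interior edge-midpoint unknown and the boundary values fixed by $\{u_{i,j}\}|_{\partial\Omega_h}=Q_b(g)$; hence it suffices to show the only homogeneous solution is trivial. By the equivalence, a homogeneous solution corresponds to $v_b\in W_h^0(\T_h)$ with $\A(v_b,v_b)=0$, and in the pure diffusion case $\A(v_b,v_b)=\kappa S(v_b,v_b)+\sum_{T}(\nabla_w v_b,\nabla_w v_b)_T$ is a sum of two nonnegative terms for every $\kappa>0$. Vanishing therefore forces $\nabla_w v_b=0$ on each element and $S(v_b,v_b)=0$. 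Because $\nabla\S(v_b)=\nabla_w v_b$ on rectangles, the first condition makes $\S(v_b)$ constant on each element, and the second then forces $v_{b,i}=\S(v_b)(M_i)$ equal to that constant on every edge; global single-valuedness across shared edges together with the vanishing boundary values yields $v_b\equiv 0$. This argument is valid for every $\kappa>0$ and every meshsize, so I do not need the smallness of $h$ required in Theorem \ref{Lemma:lemma4}; the 5-point scheme \eqref{equ.Stokes_FDS} is simply the case $\kappa=4$ and is covered at once.

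For the discrete maximum principle I would re-run the contradiction argument of Theorem \ref{Th.DMP.squaremesh} with the finite difference load $\frac{h^2}{2}f_{i+\frac12,j}$ in place of $(f,\S(v_b))_{h,T}$. Assuming $K^*<K_M$ and choosing $K^*<k<K_M$, set $\theta=(u_b-k)^+\in W_h^0(\T_h)$, which vanishes on $\partial\Omega_h$ because $u_b\le K^*<k$ there. Testing the finite difference equations against $\theta$ and using the equivalence gives $\A(u_b,\theta)=\sum_{\text{edges}}\frac{h^2}{2}f\,\theta\le 0$, since each retained midpoint load value is nonpositive and $\theta\ge 0$; this replaces \eqref{eq.th.dmp.1} and is in fact simpler, the finite difference right-hand side being a single nonpositive term per edge. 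The decomposition \eqref{eq.th.dmp.2} (with $c=0$) together with the technical inequality Lemma \ref{lemma.DMP.1}, which validates $\A(\psi,\theta)\ge 0$, then gives $\A(u_b,\theta)\ge\A(\theta,\theta)$, whence coercivity forces $\theta\equiv 0$, contradicting $\theta\ne 0$ at some interior edge.

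The main point to verify — and the only place the argument is not purely formal — is that the hypotheses of Lemma \ref{lemma.DMP.1} and of Theorem \ref{Th.DMP.squaremesh} genuinely hold on the uniform square grid with the finite difference load. On a square element $\sigma=1$, so the aspect-ratio condition is automatic and \eqref{EQ:KappaCond} collapses to $\kappa\le 4$ in the pure diffusion case; the 5-point scheme sits exactly at the endpoint $\kappa=4$ and is admissible, while the 7-point DMP inherits the restriction $0<\kappa\le 4$. I would also confirm that truncating the quadrature \eqref{equ.Simpson} to $\frac{h^2}{2}f_{i+\frac12,j}$ preserves the sign needed above, which it does trivially since the retained term is $\frac{h^2}{2}f$ with $f\le 0$, so the dropped $\O(h^4)$ terms never enter the sign analysis.
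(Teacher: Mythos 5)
Your proposal is correct, and it is worth noting that the paper itself gives no standalone proof of this theorem: it simply asserts that the result holds ``due to the connection'' between the finite difference scheme and the SWG method, i.e., it implicitly combines Theorem \ref{DF-equi-SWG-new} with Theorems \ref{Lemma:lemma4} and \ref{Th.DMP.squaremesh}. Your write-up goes beyond that citation in two ways, and both matter. For solvability, Theorem \ref{Lemma:lemma4} yields coercivity only for sufficiently small $h$, so quoting it cannot deliver the unconditional claim (``for any $\kappa>0$,'' with no mesh restriction) that Theorem \ref{DF_uniq} actually makes; your kernel argument --- $\A(v_b,v_b)=0$ forces $\nabla_w v_b=0$ and $S_T(v_b,v_b)=0$ on each element, hence $v_b$ is elementwise constant, hence globally constant by single-valuedness across interior edges, hence zero by the boundary condition --- is valid for every $h$ and every $\kappa>0$, because in the pure diffusion case $\A(v_b,v_b)$ is a sum of two nonnegative quadratic terms. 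For the DMP, the finite difference load $\tfrac{h^2}{2}f_{i+\frac12,j}$ differs from the quadrature load \eqref{EQ:MyEQ:209} of scheme \eqref{EQ.SWGh} by $\O(h^4)$, so Theorem \ref{Th.DMP.squaremesh} does not apply verbatim to the FD solution; re-running its contradiction argument with the exact FD right-hand side, as you do, is the correct repair, and the sign step indeed becomes trivial since each tested equation contributes $\tfrac{h^2}{2}f_e\theta_e\le 0$. Two minor refinements: in that last step you could dispense with the coercivity bound \eqref{EQ:815:200-0} (and its hidden smallness requirement on $h$) by reusing your own positivity argument, since $\A(\theta,\theta)\le 0$ with $\theta\in W_h^0(\T_h)$ already forces $\theta\equiv 0$ in the pure diffusion setting; and your reduction of \eqref{EQ:KappaCond} to $0<\kappa\le 4$ on square elements, with the $5$-point scheme sitting at the admissible endpoint $\kappa=4$, agrees with the remark following Lemma \ref{lemma.DMP.1}.
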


\section{Numerical Experiments}\label{numerical-experiments}
In this section, we shall numerically verify the discrete maximum principle (i.e.,  Theorem \ref{Th.DMP.squaremesh}) for the simplified weak Galerkin finite element method on rectangular partitions. In addition, some numerical results will be reported to illustrate the convergence and the accuracy of the finite difference scheme \ref{equ.Stokes_FDS}.

The following norms are used to compute the error of the numerical solutions:
\begin{eqnarray*}
&&\text{Discrete $L^2$-norm: }\\
&&\|u_b-u \|_{0,h}=h\left(\sum_{i=1}^{n+1}\sum_{j=1}^{n}|u_{i-\frac{1}{2},j}-u(x_{i-\frac{1}{2}},y_j)|^2  + \sum_{i=1}^{n}\sum_{j=1}^{n+1}|u_{i,j-\frac{1}{2}}-u(x_{i},y_{j-\frac{1}{2}})|^2\right)^{1/2},\\
&&\text{Discrete $H^1$-norm: }\\
&&\|u_b-u\|_{1,h}= h \left(\sum_{i=1}^{n}\sum_{j=1}^{n} \left|\frac{u_{i+\frac{1}{2},j}-u_{i-\frac{1}{2},j}}{h} - \frac{\partial u}{\partial x} (x_{i},y_{j})\right|^2 \right. \\
&&\qquad\qquad\qquad \left. + \sum_{i=1}^{n}\sum_{j=1}^{n} \left|\frac{u_{i,j+\frac{1}{2}}-u_{i,j-\frac{1}{2}}}{h} - \frac{\partial u}{\partial y} (x_{i},y_{j})\right|^2  \right)^{1/2} ,\\
\end{eqnarray*}

\subsection{Verification of the discrete maximum principle}
Three test cases are considered in the verification of DMP for the model problem
\eqref{ellipticbdy}-\eqref{ellipticbc} on rectangular domains $\Omega$. In all the numerical tests, the right-hand side function $f$ is non-positive in $\Omega$. The first test problems assume a vanishing reaction coefficient (i.e., $c=0$), while the third one assumes a positive constant for the reaction coefficient.

{\bf Test Case 1:}\
In this test, the model problem \eqref{ellipticbdy}-\eqref{ellipticbc} is defined on $\Omega=(0,1)^2$ with solutions and coefficients given by
\begin{equation}\label{eq.testcase.1}
\left\{
\begin{split}
&u=x^2+2xy,  \\
&\alpha =
\begin{bmatrix}
 1 &0\\
 0 & 1
\end{bmatrix},
\quad \bm{\beta}=
\begin{bmatrix}
-1\\
-1
\end{bmatrix},\quad c=0,\\
&f=-2-4x-2y.
\end{split}\right.
\end{equation}
The Dirichlet boundary data $g$ is chosen to match the exact solution. Uniform square partitions (i.e. $\sigma=1$) are employed in the SWG scheme \eqref{equation.SWG} with three values for the stabilization parameter $\kappa=0.7$, $4.0$, $20.0$. Table \ref{tab.testcase1} shows the corresponding error and convergence information for the numerical solutions. The numerical results illustrate an optimal order of convergence in the $L^2$ norm. In addition, a superconvergence of order $\O(h^2)$ was observed in the discrete $H^1$ norm when $\kappa=0.7$ and $\kappa=20.0$. For the case of $\kappa=4.0$, the numerical approximations are of machine accuracy so that no rate of convergence is necessary.

\begin{table}[H]
\begin{center}
{\small
\caption{Error and convergence performance of the SWG scheme \eqref{equation.SWG} for the test problem \eqref{eq.testcase.1} on uniform square partitions.}\label{tab.testcase1}
\begin{tabular}{|c|cc|cc|cc|cc|}
\hline
     & \multicolumn{4}{| c |}{$\kappa=0.7$}& \multicolumn{4}{| c |}{$\kappa=4.0$}\\
\hline
$h^{-1}$ & $\|u_h-u \|_{0,h}$ & $r=$ & $\|u_h-u\|_{1,h}$ & $r=$& $\|u_h-u \|_{0,h}$ & $r=$ & $\|u_h-u\|_{1,h}$ & $r=$\\
\hline
8 &    1.79e-02  & 1.6   &  5.81e-02 &  1.5 & 1.71e-15 &     -  &   6.64e-15 &     - \\
16&    4.94e-03  & 1.9   &  1.72e-02 &  1.8 & 7.11e-16 &     -  &   5.77e-15 &     - \\
32&    1.27e-03  & 2.0   &  4.83e-03 &  1.8 & 3.51e-15 &     -  &   1.78e-14 &     - \\
64&    3.22e-04  & 2.0   &  1.32e-03 &  1.9 & 7.78e-15 &     -  &   5.08e-14 &     - \\
\hline
$h^{-1}$ & \multicolumn{4}{| c |}{$\kappa=20.$}& \multicolumn{4}{c}{\multirow{6}{*}{}}\\
\cline{1-5}
& $\|u_h-u \|_{0,h}$ & $r=$ & $\|u_h-u\|_{1,h}$ & $r=$ \\
\cline{1-5}
8 &    3.57e-03 &  2.0  &   1.37e-02  & 1.9\\
16&    8.81e-04 &  2.0  &   3.72e-03  & 1.9\\
32&    2.19e-04 &  2.0  &   9.99e-04  & 1.9\\
64&    5.48e-05 &  2.0  &   2.66e-04  & 1.9\\
\cline{1-5}
\end{tabular}
}
\end{center}
\end{table}

Figure \ref{fig.DMP1} shows the surface plot of the SWG approximation and its maximum value on the domain boundary for the test problem \eqref{eq.testcase.1}. Each of the surface plot indicates that the maximum value of the SWG solution is attained on the domain boundary, which is consistent with the DMP theory developed in Theorem \ref{Th.DMP.squaremesh}). We note that the DMP theory was not applicable to the case of $\kappa=20$, though it is numerically valid.

\begin{figure}[h!]
\centering
\subfigure[Numerical solutions and their maximum values on the domain boundary: rectangular partitions of size $4\times4$ (left), $16\times16$ (middle) and $64\times 64$ (right) with $\kappa=0.7$.]{
\label{Fig.sub1.1}
\includegraphics [width=0.32\textwidth]{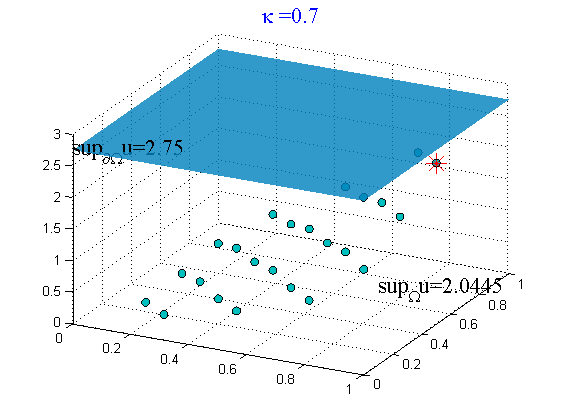}

\includegraphics [width=0.32\textwidth]{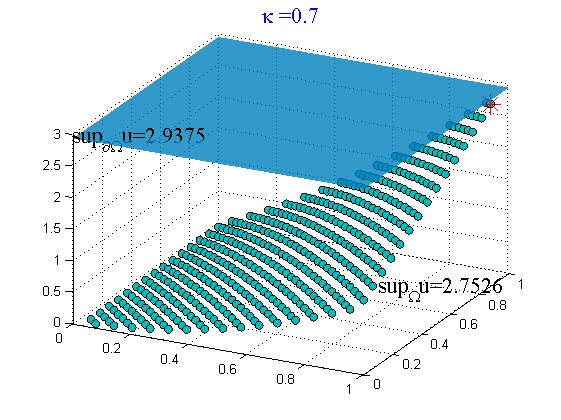}

\includegraphics [width=0.32\textwidth]{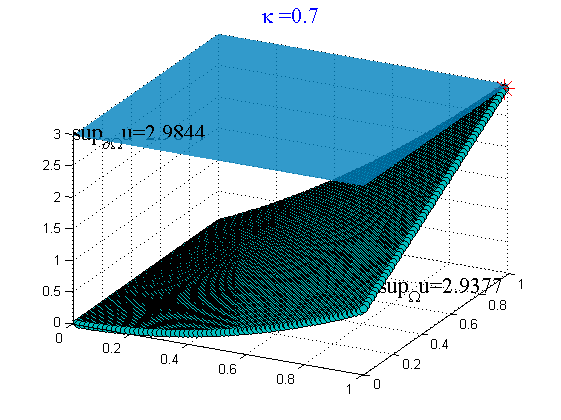}
}\\
\subfigure[Numerical solutions and their maximum values on the domain boundary: rectangular partitions of size $4\times4$ (left), $16\times16$ (middle) and $64\times 64$ (right) with $\kappa=4.0$.]{
\label{Fig.sub1.2}
\includegraphics [width=0.32\textwidth]{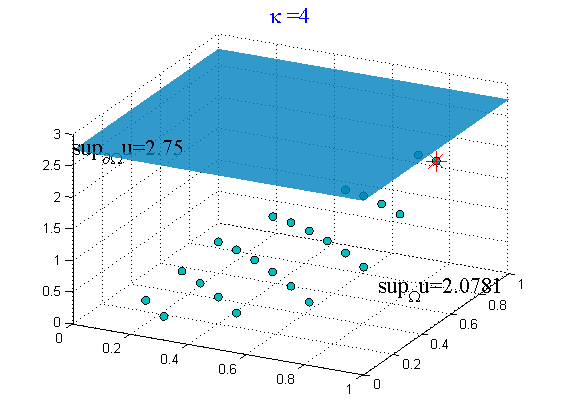}

\includegraphics [width=0.32\textwidth]{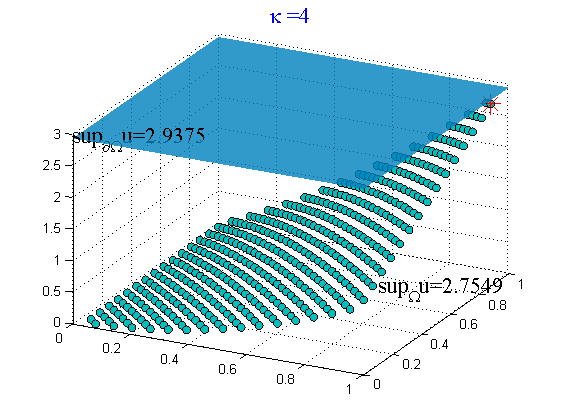}

\includegraphics [width=0.32\textwidth]{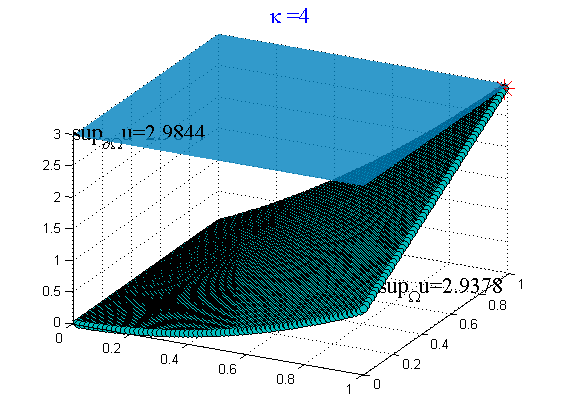}
}\\
\subfigure[Numerical solutions and their maximum values on the domain boundary: rectangular partitions of size $4\times4$ (left), $16\times16$ (middle) and $64\times 64$ (right) with $\kappa=20.0$.]{
\label{Fig.sub1.3}
\includegraphics [width=0.32\textwidth]{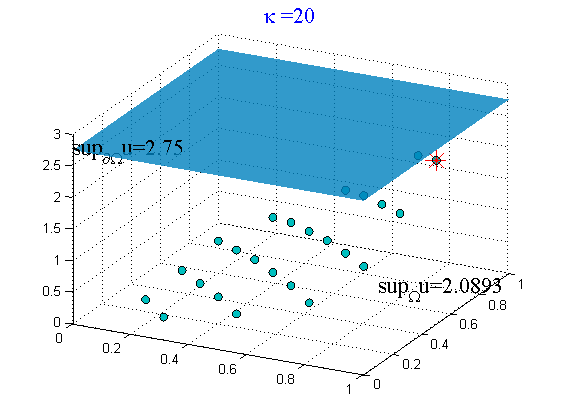}

\includegraphics [width=0.32\textwidth]{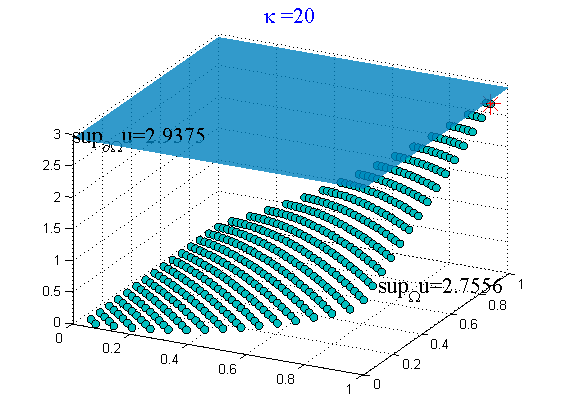}

\includegraphics [width=0.32\textwidth]{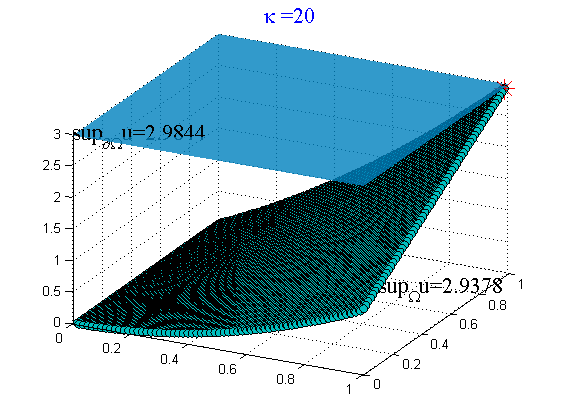}
}\\
\caption{\label{fig.DMP1} DMP verification of the SWG scheme \eqref{equation.SWG} for the test problem \eqref{eq.testcase.1}.}
\end{figure}

{\bf Test Case 2:}\
In this test, the model problem \eqref{ellipticbdy}-\eqref{ellipticbc} has the following configuration on its solution and the coefficients:
\begin{equation}\label{eq.testcase.2}
\left\{
\begin{split}
&u=-\sin(x)\sin(y), \\
&\alpha=
\begin{bmatrix}
 xy+1 &0\\
 0    & 3xy
\end{bmatrix},
\quad
\bm{\beta}=
\begin{bmatrix}
y\\
3x
\end{bmatrix},
\quad c=0, \\
& f= -(4xy+1) \sin(x)\sin(y).
\end{split}\right.
\end{equation}
The domain is the unit square, and the Dirichlet boundary value $g$ is chosen to match the exact solution.

Table \ref{tab.testcase2} illustrates the error and convergence performance of the numerical solutions arising from the SWG scheme \eqref{equation.SWG} on uniform square partitions with three values of $\kappa=0.7$, $4.0$, $20.0$.
The table shows an optimal order of convergence (i.e., $\O(h^2)$) in the $L^2$ norm. The computational results from this test also suggest a superconvergence of the numerical solution in the discrete $H^1$ norm for all three values of $\kappa$.

\begin{table}[ht]
{\small
\begin{center}
\caption{Error and convergence performance of the SWG scheme \eqref{equation.SWG} for the test problem \eqref{eq.testcase.2} on uniform square partitions.}\label{tab.testcase2}
\begin{tabular}{|c|cc|cc|cc|cc|}
\hline
     & \multicolumn{4}{| c |}{$\kappa=0.7$}& \multicolumn{4}{| c |}{$\kappa=4.0$}\\
\hline
$h^{-1}$ & $\|u_h-u \|_{0,h}$ & $r=$ & $\|u_h-u\|_{1,h}$ & $r=$& $\|u_h-u \|_{0,h}$ & $r=$ & $\|u_h-u\|_{1,h}$ & $r=$\\
\hline
 8  &  1.14e-02 &  1.5  & 4.58e-02  & 1.2 &  2.34e-03 & 1.9  & 1.03e-02 &  1.6\\
 16 &  3.17e-03 &  1.9  & 1.54e-02  & 1.6 &  5.94e-04 & 2.0  & 3.17e-03 &  1.7\\
 32 &  8.15e-04 &  2.0  & 4.85e-03  & 1.7 &  1.49e-04 & 2.0  & 9.62e-04 &  1.7\\
 64 &  2.05e-04 &  2.0  & 1.50e-03  & 1.7 &  3.73e-05 & 2.0  & 2.91e-04 &  1.7\\
\hline
$h^{-1}$ & \multicolumn{4}{| c |}{$\kappa=20.$}& \multicolumn{4}{c}{\multirow{6}{*}{}}\\
\cline{1-5}
& $\|u_h-u \|_{0,h}$ & $r=$ & $\|u_h-u\|_{1,h}$ & $r=$ \\
\cline{1-5}
  8  & 6.42e-04 &  2.0  &   2.83e-03 &  1.7\\
  16 & 1.63e-04 &  2.0  &   8.27e-04 &  1.8\\
  32 & 4.12e-05 &  2.0  &   2.40e-04 &  1.8\\
  64 & 1.04e-05 &  2.0  &   6.96e-05 &  1.8\\
\cline{1-5}
\end{tabular}
\end{center}
}
\end{table}

Figure \ref{fig.DMP2} shows the numerical solutions and their maximum values on the domain boundary for the test problem \eqref{eq.testcase.2} on the $16\times 16$ uniform square partition. It can be seen that DMP is satisfied for each of the test value of $\kappa$. Similar tests were conducted on finer rectangular partitions such as $32\times32$ and $64\times64$, and the DMP was observed in all of the computations.

\begin{figure}[h!]
\includegraphics [width=0.32\textwidth]{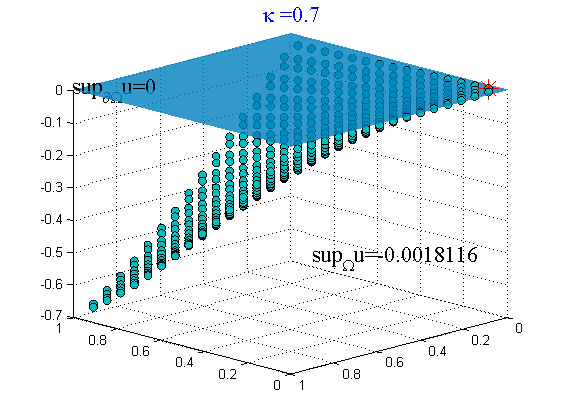}
\includegraphics [width=0.32\textwidth]{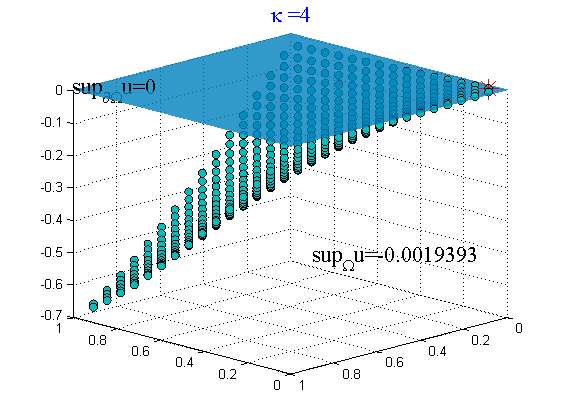}
\includegraphics [width=0.32\textwidth]{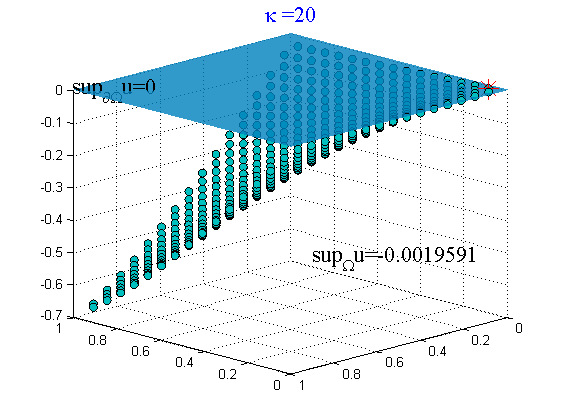}
\caption{\label{fig.DMP2} DMP verification of the SWG scheme \eqref{equation.SWG} for the test problem \eqref{eq.testcase.2} using the partition of size $16\times16$: $\kappa=0.7$ (left), $\kappa=4.0$ (middle), and $\kappa=20.0$ (right).}
\end{figure}

\medskip
{\bf Test Case 3:}\
The model problem \eqref{ellipticbdy}-\eqref{ellipticbc} is now defined in $\Omega=(-1,1)^2$. The solution and the PDE coefficients are given by
\begin{equation}\label{eq.testcase.3}\left\{
\begin{split}
&u=-(x^2(x^2-1.2)-0.3)(y^2(y^2-1.2)-0.3),\\
&\alpha=
\begin{bmatrix}
 1 &0\\
 0    & 1
\end{bmatrix},
\quad
\bm{\beta}=
\begin{bmatrix}
0\\
0
\end{bmatrix},
\quad c=16, \\
&f= 8y^2(2.7-y^2)(x^2(x^2-1.2)-0.3) + 8x^2(2.7-x^2)(y^2(y^2-1.2)-0.3).
\end{split}\right.
\end{equation}
The Dirichlet boundary data $g$ is computed to match the exact solution $u=u(x,y)$.

Table \ref{tab.testcase3} shows the error and convergence performance of the numerical solutions on uniform square partitions with three values of $\kappa=0.7$, $4.0$, $20.0$. Optimal order of convergence (i.e., $r=2$) is clearly indicated in the table for the $L^2$ error. In addition, a superconvergence of order $\O(h^2)$ is observed in the discrete $H^1$ norm for all three test cases of $\kappa$.

\begin{table}[ht]
{\small
\begin{center}
\caption{Error and convergence performance of the SWG scheme \eqref{equation.SWG} for the test problem \eqref{eq.testcase.3} on uniform square partitions.}\label{tab.testcase3}
\begin{tabular}{|c|cc|cc|cc|cc|}
\hline
     & \multicolumn{4}{| c |}{$\kappa=0.7$}& \multicolumn{4}{| c |}{$\kappa=4.0$}\\
\hline
$h^{-1}$ & $\|u_h-u \|_{0,h}$ & $r=$ & $\|u_h-u\|_{1,h}$ & $r=$& $\|u_h-u \|_{0,h}$ & $r=$ & $\|u_h-u\|_{1,h}$ & $r=$\\
\hline
8 &    2.43e-02 &  1.6  &   5.25e-02 &  1.6  &  1.99e-03  & 2.0  &   1.23e-02 &  1.9\\
16&    6.58e-03 &  1.9  &   1.46e-02 &  1.9  &  4.95e-04  & 2.0  &   3.12e-03 &  2.0\\
32&    1.68e-03 &  2.0  &   3.76e-03 &  2.0  &  1.24e-04  & 2.0  &   7.82e-04 &  2.0\\
64&    4.23e-04 &  2.0  &   9.49e-04 &  2.0  &  3.09e-05  & 2.0  &   1.96e-04 &  2.0\\
\hline
$h^{-1}$ & \multicolumn{4}{| c |}{$\kappa=20.$}& \multicolumn{4}{c}{\multirow{6}{*}{}}\\
\cline{1-5}
& $\|u_h-u \|_{0,h}$ & $r=$ & $\|u_h-u\|_{1,h}$ & $r=$ \\
\cline{1-5}
  8  & 4.64e-03 &  2.0  &   1.39e-02 &  1.8\\
  16 & 1.16e-03 &  2.0  &   3.57e-03 &  2.0\\
  32 & 2.91e-04 &  2.0  &   8.98e-04 &  2.0\\
  64 & 7.28e-05 &  2.0  &   2.25e-04 &  2.0\\
\cline{1-5}
\end{tabular}
\end{center}
}
\end{table}

Figure \ref{fig.DMP3} illustrates the numerical solutions and their maximum values on the domain boundary for the test problem \eqref{eq.testcase.3} on the uniform square partitions of size $16\times16$. The plots indicate that the maximum value of the numerical solution is not attained on the domain boundary, but satisfies the discrete maximum principle of $\max_{(x,y) \in \Omega_h}u_b \leq \max_{(x,y) \in \partial \Omega_h}\max(u_b,0)$ for all three cases of $\kappa =0.7$, $4.0$, and $20.0$.

\begin{figure}[h!]
\includegraphics [width=0.32\textwidth]{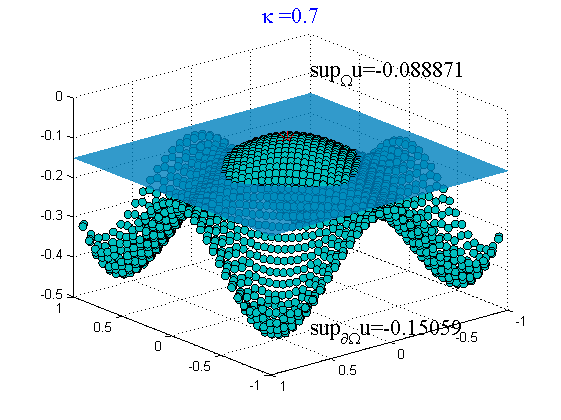}
\includegraphics [width=0.32\textwidth]{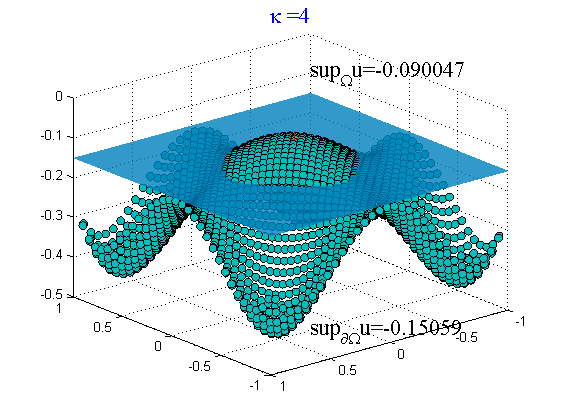}
\includegraphics [width=0.32\textwidth]{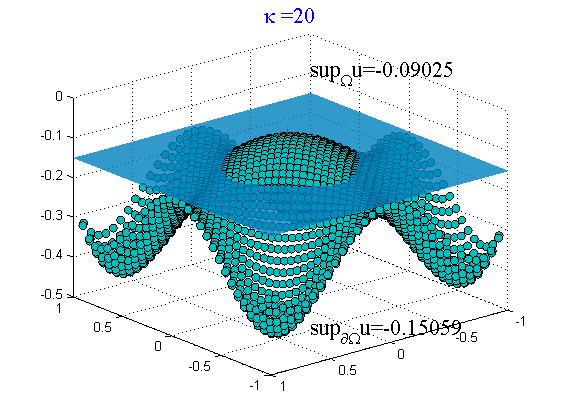}
\caption{\label{fig.DMP3} Plot of numerical solutions and their maximum values on the domain boundary arising from the SWG scheme \eqref{equation.SWG} for the test problem \eqref{eq.testcase.3}, using the uniform rectangular partition of size $16\times16$.}
\end{figure}

In summary, the discrete maximum principle is satisfied for all three test problems for the case of $\kappa=0.7$ and $\kappa=4$. The result is in good consistency with the DMP theory developed in Theorem \ref{Th.DMP.squaremesh}. The computational result also indicates a satisfaction of the DMP for the case of
$\kappa=20$ for which no theory was known.

\subsection{Numerical results for the finite difference scheme}
The goal of this section is to investigate the performance of the finite difference scheme \eqref{equation.SWG}.
For simplicity, we consider the model problem  \eqref{ellipticbdy}-\eqref{ellipticbc} with $\alpha=1$, $\bbeta=0$, and $c=0$ on the unit square domain $\Omega=(0,1)^2$. The two test cases in our numerical experiments assume the following exact solution and the right-hand side function:
\begin{eqnarray}
&&\left\{
\begin{array}{lllll}
u=-x(x-1)y(y-1), \\
f=2x(x-1)+2y(y-1),
\end{array}
\right. \label{eq.testDMP.1}\\
~\nonumber\\
&&\left\{
\begin{array}{lllll}
u=-\sin(x)\sin(y)-x^2+y^2,\\
f=-2\sin(x)\sin(y).
\end{array}
\right. \label{eq.testDMP.2}
\end{eqnarray}
It is easy to see that the function $f$ is non-positve in both tests.

Table \ref{tab.testcaseFD12} show the error and convergence performance of the finite difference scheme \eqref{equation.SWG}. Optimal order of convergence can be seen for the $L^2$ error, while a superconvergence of order $\O(h^2)$ is illustrated for the numerical solution in the discrete $H^1$ norm.

\begin{table}[ht]
{\small
\begin{center}
\caption{Error and convergence performance of the finite difference scheme \eqref{equation.SWG} for the test cases \eqref{eq.testDMP.1} and \eqref{eq.testDMP.2}.}\label{tab.testcaseFD12}
\begin{tabular}{|c|cc|cc|cc|cc|}
\hline
     & \multicolumn{4}{| c |}{Test case \eqref{eq.testDMP.1}, $\kappa=4.0$} & \multicolumn{4}{| c |}{Test case \eqref{eq.testDMP.2}, $\kappa=4.0$}\\
\hline
$h^{-1}$ & $\|u_h-u \|_{0,h}$ & $r=$ & $\|u_h-u\|_{1,h}$ & $r=$ & $\|u_h-u \|_{0,h}$ & $r=$ & $\|u_h-u\|_{1,h}$ & $r=$\\
\hline
 8  &    4.59e-04 &     -   &  1.46e-03  &    -&    3.47e-05  &    -  &   4.10e-04 &     -\\
 16 &    1.14e-04 &  2.0   &  3.66e-04  & 2.0&    8.63e-06  & 2.0  &   1.02e-04 &  2.0\\
 32 &    2.85e-05 &  2.0   &  9.15e-05  & 2.0&    2.15e-06  & 2.0  &   2.56e-05 &  2.0\\
 64 &    7.12e-06 &  2.0   &  2.29e-05  & 2.0&    5.33e-07  & 2.0  &   6.42e-06 &  2.0\\
 128&    1.78e-06 &  2.0   &  5.72e-06  & 2.0&    1.30e-07  & 2.0  &   1.61e-06 &  2.0\\
\hline
\end{tabular}
\end{center}
}
\end{table}

Tables \ref{table.testDMP.12}-\ref{table.testDMP.12_20} show  $\displaystyle\max_{(x,y)\in\Omega_h}u_b(x,y)$ and $\displaystyle\max_{(x,y)\in\partial\Omega_h}u_b(x,y)$ for the numerical solutions arising from the finite difference scheme \eqref{equ.scheme-DF-SWG}. Table \ref{table.testDMP.12} is concerned with the case of the scheme with $\kappa = 0.7$ and $\kappa=4$ for which the DMP theory is applicable. It can be seen from the table that $\displaystyle \max_{(x,y)\in\Omega_h}u_b(x,y) < \max_{(x,y)\in\partial\Omega_h}u_b(x,y)$ so that the (strong) DMP is verified.
For curiosity, we also tested the case of $\kappa =20$ for which no DMP theory is known. It is interesting to note that the discrete maximum principle still holds true for the case of $\kappa=20$, as shown in Table \ref{table.testDMP.12_20}.

We point out that a penalization method was employed to implement the Dirichlet boundary condition in our computation so that the numerical boundary values are slightly different from the exact boundary data.

\begin{table}[H]
\begin{center}
\caption{DMP verification of the finite difference scheme \eqref{equ.scheme-DF-SWG} for the test problem \eqref{eq.testDMP.1} with $\kappa=0.7$ and $\kappa=4$ on uniform partitions.}\label{table.testDMP.12}
\begin{tabular}{||c||c|c|c||c|c|c||}
\hline
\multicolumn{7}{| c |}{Test problem \eqref{eq.testDMP.1}}\\
\hline
& \multicolumn{3}{| c |}{$\kappa=0.7$ }& \multicolumn{3}{| c |}{$\kappa=4.0$ }\\
\hline
$h$ & 1/8 &1/32 & 1/128  & 1/8 &1/32 & 1/128 \\
\hline
&&&&&&\\[-7pt]
$\displaystyle\max_{\partial\Omega}u_b(x,y)$  & -7.5e-11  & -4.9e-12  & -2.4e-13 & -6.5e-11  & -4.6e-12 &  -3.0e-13 \\
\hline
&&&&&&\\[-7pt]
$\displaystyle\max_{\Omega}u_b(x,y)$    & -7.5e-03  & -4.9e-04   & -3.0e-05 & -6.5e-03   & -4.6e-04  &  -3.0e-05 \\
\hline
\multicolumn{7}{| c |}{Test problem \eqref{eq.testDMP.2}}\\
\hline
& \multicolumn{3}{| c |}{$\kappa=0.7$ }& \multicolumn{3}{| c |}{$\kappa=4.0$ }\\
\hline
$h$ & 1/8 &1/32 & 1/128  & 1/8 &1/32 & 1/128\\
\hline
&&&&&&\\[-7pt]
$\displaystyle\max^{~}_{x\in\partial\Omega}u_b(x,y)$  &  0.9435  & 0.9866  & 0.9966 & 0.9435  & 0.9866 &  0.9966 \\
\hline
&&&&&&\\[-7pt]
$\displaystyle\max_{x\in\Omega}u_b(x,y)$          & 0.7448  &  0.9408   & 0.9855 &0.7627   & 0.9419  &   0.9855 \\
\hline
\end{tabular}
\end{center}
\end{table}

\begin{table}[H]
\begin{center}
\caption{DMP verification of the finite difference scheme \eqref{equ.scheme-DF-SWG} for the test problems \eqref{eq.testDMP.1} and \eqref{eq.testDMP.2} with $\kappa=20.0$ on uniform meshes.}\label{table.testDMP.12_20}
\begin{tabular}{||c||c|c|c||c|c|c||}
\hline
& \multicolumn{3}{| c |}{Test problem \eqref{eq.testDMP.1}, $\kappa=20.0$ }& \multicolumn{3}{| c |}{Test problem \eqref{eq.testDMP.2}, $\kappa=20.0$ }\\
\hline
$h$ & 1/8 &1/32 & 1/128  & 1/8 &1/32 & 1/128\\
\hline
&&&&&&\\[-7pt]
$\displaystyle\max_{\partial\Omega}u_b(x,y)$  &  -6.3e-11~ & -4.6e-12~& -3.0e-13~ & 0.9435~ & 0.9866~& 0.9966~ \\
\hline
&&&&&&\\[-7pt]
$\displaystyle\max_{\Omega}u_b(x,y)$    &  -6.3e-03~ & -4.6e-04~ & -3.0e-05~ & 0.7682~ & 0.9423~ & 0.9856~\\
\hline
\end{tabular}
\end{center}
\end{table}

\begin{figure}[h!]
\centering
\includegraphics [width=0.32\textwidth]{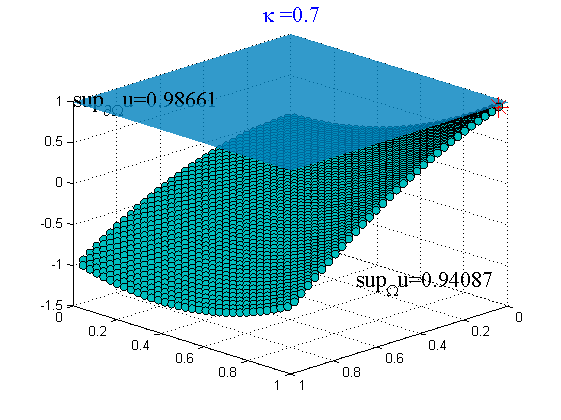}
\includegraphics [width=0.32\textwidth]{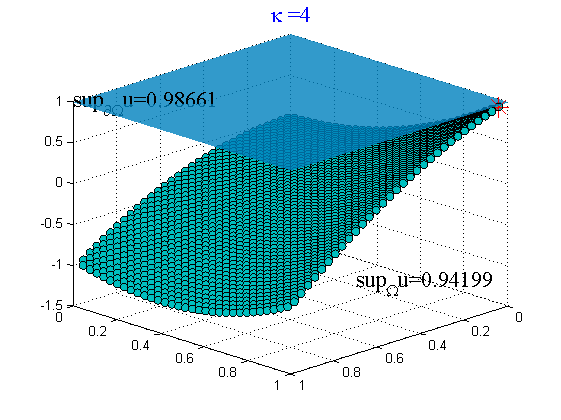}
\includegraphics [width=0.32\textwidth]{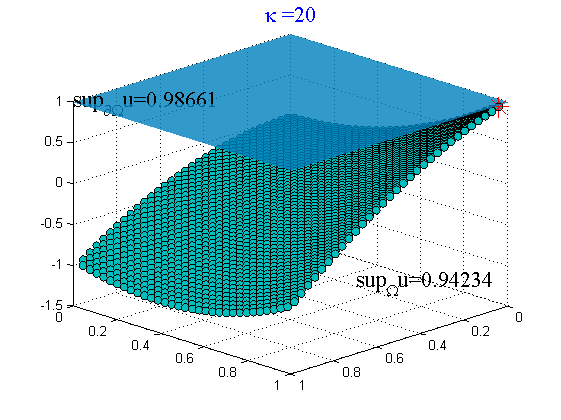}
\caption{\label{fig.DMP.1} DMP verification of the finite difference scheme \eqref{equ.scheme-DF-SWG} for the test problem \eqref{eq.testDMP.2}, using the uniform partition of size $32\times32$.}
\end{figure}

\newpage

\end{document}